\gdef\n@te#1#2{\leavevmode\vadjust{%
 {\setbox\z@\hbox to\z@{\strut#1}%
  \setbox\z@\hbox{\raise\dp\strutbox\box\z@}\ht\z@=\z@\dp\z@=\z@%
  #2\box\z@}}}
\gdef\leftnote#1{\n@te{\hss#1\quad}{}}
\gdef\rightnote#1{\n@te{\quad\kern-\leftskip#1\hss}{\moveright\hsize}}
\gdef\?{\FN@\qumark}
\gdef\qumark{\ifx\next"\DN@"##1"{\leftnote{\rm##1}}\else
 \DN@{\leftnote{\rm??}}\fi{\rm??}\next@}}
\DeclareFontFamily{OT1}{wncyr}{\hyphenchar\font45 }
\DeclareFontShape{OT1}{wncyr}{m}{n}{%
   <5> <6> <7> <8> <9> gen * wncyr
   <10> <10.95> <12> <14.4> <17.28> <20.74>  <24.88>wncyr10}{}
\DeclareFontShape{OT1}{wncyr}{m}{it}{%
   <5> <6> <7> <8> <9> gen * wncyi
   <10> <10.95> <12> <14.4> <17.28> <20.74> <24.88> wncyi10}{}
\DeclareFontShape{OT1}{wncyr}{m}{sc}{%
   <5> <6> <7> <8> <9> <10> <10.95> <12> <14.4>
   <17.28> <20.74> <24.88>wncysc10}{}
\DeclareFontShape{OT1}{wncyr}{b}{n}{%
   <5> <6> <7> <8> <9> gen * wncyb
   <10> <10.95> <12> <14.4> <17.28> <20.74> <24.88>wncyb10}{}
\DeclareMathSymbol{\twoheadrightarrow} {\mathrel}{AMSa}{"10}
\theoremstyle{plain}
\newtheorem{theorem}{Theorem}
\newtheorem{lemma}{Lemma}
\newtheorem{corollary}{Corollary}
\theoremstyle{definition}
\newtheorem{definition}{Definition}
\theoremstyle{remark}
\newtheorem*{rem}{\it Remark}
\def\s3{{\rm SL}_3(\mathbb C)}
\begin{document}

\title[The variety of flexes of plane cubics]
{The variety of flexes of plane cubics}

\author[Vladimir L. Popov]{Vladimir L. Popov${}^{1}$}
\thanks{
${}^1$  Steklov Mathematical Institute,
Russian Academy of Sciences, Gub\-kina 8, Mos\-cow
119991, Russia, {\rm popovvl@mi-ras.ru}.
\\[.5mm]
\indent
This work was supported by the Russian Science Foundation under grant no. 23-11-00033, {\rm https:/\hskip -1mm/rscf.ru/en/project/23-11-00033/}
}


\vskip 2mm

\begin{abstract}
Let $X$ be the variety of flexes of plane cubics. We prove that
(1) $X$  is an irreducible  rational algebraic variety endowed with a faithful algebraic action of
${\rm PSL}_3$;
(2)  $X$ is ${\rm PSL}_3$-equiva\-riant\-ly birati\-o\-nally
isomorphic to a homoge\-neous fiber space over ${\rm PSL}_3/K$
with fiber $\mathbb P^1$ for some subgroup $K$ isomorphic to
the binary tetra\-hed\-ral group ${\rm SL}_2(\mathbb F_3)$.
\end{abstract}

\maketitle

\section{Introduction}

We consider
the $3$-dimensional complex coordinate vector space
\begin{equation*}
V:=\mathbb C^3.
\end{equation*}
Let
$x_0, x_1, x_2\in V^*$ be the standard coordinate functions on $V$. In the $10$-dimensional complex vector space
\begin{equation*}
U:={\sf S}^3(V^*)
\end{equation*}
of degree $3$ forms on $V$, all monomials $x_0^{i_0}x_1^{i_1}x_2^{i_2}$ with $i_0+i_1+i_2=3$, ordered in some way, form
a basis.\;Let $\{\alpha_{i_0 i_1 i_2}\mid i_0+i_1+i_2=3\}$ be the dual basis of
$U^*$.

The sets of forms  $\{x_j\}$ and $\{\alpha_{i_0 i_1 i_2}\}$ are the projective coordinate systems on the  projective spaces
${\mathbb P}(V)$ and ${\mathbb P}(U)$
of one-dimensional linear subspaces of
 $V$ and $U$
 respectively.
 Let
\begin{equation}\label{pU}
{p}_U\colon
U\setminus \{0\}\to {\mathbb P}(U)
\end{equation}
be the canonical projection.

We consider the following forms on
 $ {\mathbb P}(U)\times {\mathbb P}(V)
 $:
\begin{align}\label{FH}
F&:=\sum_{i_0+i_1+ i_2=3} \alpha_{i_0 i_1 i_2} x_0^{i_0}x_1^{i_1}x_2^{i_2},\\
\label{H}
H&:=
{\rm det}\Big(\frac{\partial^2F}{\partial x_i\partial x_j}\Big).
\end{align}
They determine the closed subset
\begin{equation}\label{X}
X:=\{a\in
{\mathbb P}(U)\times {\mathbb P}(V)
\mid F(a)=H(a)=0\}
\end{equation}
of ${\mathbb P}(U)
\times {\mathbb P}(V).
$
Let
\begin{equation}\label{pp}
{\mathbb P}(V)\overset{\hskip 1.5mm {\pi}_2}{\longleftarrow} X\overset{{\pi }_9}{\longrightarrow} {\mathbb P}(U)
\end{equation}
be the natural projections.
If
$f\in U$ is a nonzero form such that the cubic
\begin{equation}\label{cubic}
C(f):=\{c\in
{\mathbb P}(V)\mid f(c)=0\}
\end{equation} is an elliptic curve, then
${\pi}_2\big({\rm \pi}_9^{-1}(p_U (f))\big)$
is
the set of all  flexes  (i.e., inflection points) of
$C(f)$, see
\cite[pp.\,293--294]{BK}.
This
entails that a dense subset
of
$X$
is identified with the set of all pairs
 $(C, c)$, where $C$ is an elliptic curve in ${\mathbb P}(V)
 $, and $c$ is its flex (in fact, in Lemma \ref{S}(b) below is shown that this subset is open in $X$, and therefore, is endowed with the structure of an algebraic variety for which $X$ is a compactification).
 For this reason, $X$ is called
  the {\it variety of flexes
 of plane cubics}. The monodromy of $\pi_9$ and the cohomological properties of
 $X$ were explored in
 \cite{Har}, \cite{Kul1}, \cite{Kul2}, \cite{Pop3}. The aim of this paper is to obtain several results about
 other properties of
 $X$. Namely, we prove the following.

 The complex algebraic group
\begin{equation*}
G:=\s3
\end{equation*}
naturally acts on $V$, $U$, ${\mathbb P}(V)$, ${\mathbb P}(U)$, ${\mathbb P}(U)\times {\mathbb P}(V)$. The set $X$ is $G$-stable. The inefficiency kernel of the $G$-actions on ${\mathbb P}(V)$, ${\mathbb P}(U)$, ${\mathbb P}(U)\times {\mathbb P}(V)$, $X$ is the center $Z$ of $G$, therefore, these actions determine the faithful actions of the quotient group
\begin{equation*}
PG:=G/Z={\rm PSL}_3(\mathbb C)
\end{equation*}
  on these varieties.   Let $\varepsilon\in\mathbb C$ be a primitive cubic root of $1$. For the Galois field $\mathbb F_3:=\mathbb Z/3\mathbb Z$, denote
  ${\boldsymbol z}
 :=z+3\mathbb Z\in
 \mathbb F_3$. Let $\mathcal F$  be the set of the following nine points of ${\mathbb P}(V)$ numbered by the elements of $\mathbb F_3^2$ (the reason for choosing such numbering will become clear from what follows, see Lemma \ref{proC}$({\rm H}{}_{10})$,$({\rm H}{}_{11})$):
 \begin{equation}\label{pij}
\left.
\begin{split}
t_{\boldsymbol{0},\boldsymbol{0}}&:=(0:-1:1), &\; t_{\boldsymbol{0},\boldsymbol{1}}&:=(0:-\varepsilon:1),&\; t_{\boldsymbol{0},\boldsymbol{2}}&:=(0:-\varepsilon^2:1),\\
t_{\boldsymbol{1},\boldsymbol{0}}&:=(1:0:-1), &\; t_{\boldsymbol{1},\boldsymbol{1}}&:=(1:0:-\varepsilon),&\; t_{\boldsymbol{1},\boldsymbol{2}}&:=(1:0:-\varepsilon^2),\\
t_{\boldsymbol{2},\boldsymbol{0}}&:=(-1:1:0), &\; t_{\boldsymbol{2},\boldsymbol{1}}&:=(-\varepsilon:1:0),&\; t_{\boldsymbol{2},\boldsymbol{2}}&:=(-\varepsilon^2:1:0).
\end{split}
\right\}
\end{equation}
 Then the $PG
$-normalizer of $\mathcal F$, i.e.,
$$
N_{PG, \mathcal F}:=\{g\in PG
\mid g\cdot \mathcal F=\mathcal F\},
$$
is the so-called Hessian group $\rm Hes$ of order $216$ (see below
its  definition \eqref{fHes}
and Lem\-ma\;\ref{proC}$({\rm H}{}_9)$).
It acts transitively on $\mathcal F$. For any $(\boldsymbol{i},\boldsymbol{ j})\in\mathbb F_3^2$, the stabi\-lizer of $t_{\boldsymbol{i},\boldsymbol{j}}$ with respect to this action is a subgroup ${\rm Hes}_{\boldsymbol{i},\boldsymbol{j}}$ of $\rm Hes$  of order $24$. It is isomorphic to the binary tetrahedral group ${\rm SL}_2(\mathbb F_3)$ and naturally acts on the projective line $\ell$ parameterizing the Hesse pencil
of cubics in ${\mathbb P}(V)$ with the set of flexes $\mathcal F$. Let
$PG
\times^{{\rm Hes}_{\boldsymbol{i},\boldsymbol{j}}} \ell$
be the
homogeneous fiber space
over $PG
/{\rm Hes}_{\boldsymbol{i},\boldsymbol{j}}$ with fiber $\ell$ determined by this action (see its definition in Subsection  \ref{hfs}).

 The  following Theo\-rems \ref{main1}--\ref{main3}
  are the main results of this paper.
  \begin{theorem}\label{main1} The algebraic variety
 $X$ is irreducible.
 \end{theorem}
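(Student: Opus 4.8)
The plan is to reduce the irreducibility of $X$ to that of a single fiber of the projection $\pi_2\colon X\to\mathbb P(V)$, exploiting the fact that $G$ acts transitively on $\mathbb P(V)$. First I would note that for a fixed $c\in\mathbb P(V)$ a pair $(f,c)$ with $f\in\mathbb P(U)$ lies in $X$ exactly when $F(f,c)=0$ and $H(f,c)=0$; here $f\mapsto F(f,c)=f(c)$ is a linear form on $U$, while $f\mapsto H(f,c)$ is a cubic form on $U$, since $H$ is a $3\times 3$ determinant whose entries are linear in the coefficients $\alpha_{i_0i_1i_2}$ of $f$. Thus $\pi_2^{-1}(c)$ is the intersection of a hyperplane $\mathbb P^{8}\subset\mathbb P(U)=\mathbb P^{9}$ with a cubic hypersurface, and because $\pi_2$ is $G$-equivariant and $G$ is transitive on $\mathbb P(V)$, all these fibers are projectively equivalent. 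So it suffices to show that one of them is irreducible (and non-empty).

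Next I would take the explicit point $c_0=(0:0:1)$. Then $F(f,c_0)=\alpha_{003}$, so the relevant hyperplane is $\{\alpha_{003}=0\}$, with homogeneous coordinates $\alpha_{i_0i_1i_2}$, $(i_0,i_1,i_2)\neq(0,0,3)$. Writing out the Hessian matrix $\bigl(\partial^2F/\partial x_i\partial x_j\bigr)$ at $c_0$ and computing its determinant (a short calculation) shows that, restricted to $\{\alpha_{003}=0\}$, the equation $H(f,c_0)=0$ becomes
\[
g:=\alpha_{201}\alpha_{012}^{2}+\alpha_{102}^{2}\alpha_{021}-\alpha_{111}\alpha_{012}\alpha_{102}=0 .
\]
The polynomial $g$ involves only the five coordinates $\alpha_{201},\alpha_{021},\alpha_{111},\alpha_{102},\alpha_{012}$, and it is irreducible: regarded as a polynomial in $\alpha_{201}$ it equals $\alpha_{012}^{2}\,\alpha_{201}+\alpha_{102}\bigl(\alpha_{102}\alpha_{021}-\alpha_{111}\alpha_{012}\bigr)$, i.e.\ is of degree $1$ in $\alpha_{201}$ with leading coefficient $\alpha_{012}^{2}$ coprime to the $\alpha_{201}$-free term. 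Hence $\{g=0\}\subset\mathbb P^{8}$ is the cone, with vertex the $3$-plane $\{\alpha_{201}=\alpha_{021}=\alpha_{111}=\alpha_{102}=\alpha_{012}=0\}$, over the irreducible cubic threefold defined by $g$ in the $\mathbb P^{4}$ of coordinates $\alpha_{201},\alpha_{021},\alpha_{111},\alpha_{102},\alpha_{012}$. A cone over an irreducible projective variety is irreducible, and it is clearly non-empty, so $\pi_2^{-1}(c_0)$ is an irreducible (seven-dimensional) variety.

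Finally I would pass from the fibers to $X$. Since $\pi_2$ is $G$-equivariant, transitivity of $G$ on $\mathbb P(V)$ gives $g\cdot\pi_2^{-1}(c_0)=\pi_2^{-1}(g\cdot c_0)$ for all $g\in G$, and these translates exhaust $X$; equivalently, the action morphism
\[
\mu\colon G\times\pi_2^{-1}(c_0)\longrightarrow X,\qquad (g,x)\longmapsto g\cdot x,
\]
is well defined (as $X$ is $G$-stable) and surjective. Its source is the product of the irreducible variety $G$ (a connected algebraic group) and the irreducible variety $\pi_2^{-1}(c_0)$, hence is irreducible; therefore $X=\mu\bigl(G\times\pi_2^{-1}(c_0)\bigr)$ is irreducible.

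The only genuine computation in this approach is the evaluation of the Hessian determinant of $F$ at $c_0$ and the elementary irreducibility check for the resulting cubic $g$; I expect that to be the main (though routine) obstacle, everything else being the standard ``homogeneous fibration over a single $G$-orbit'' argument. The one point worth stating carefully is that $\pi_2^{-1}(c)$ is to be taken with its reduced structure as a closed subvariety of $\mathbb P(U)\times\mathbb P(V)$ — which is precisely how $X$ itself is defined — so that no scheme-theoretic subtlety enters the reduction to a single fiber.
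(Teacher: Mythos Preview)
Your argument is correct and is genuinely different from the paper's proof. The paper works with the \emph{other} projection $\pi_9\colon X\to\mathbb P(U)$: it first bounds the fiber dimensions (using the orbit classification of cubics and an explicit table of Hessians) to show that every irreducible component of $X$ has dimension $9$ and surjects onto $\mathbb P(U)$, and then invokes the transitivity of the monodromy of $\pi_9$ (a result of Harris) to rule out multiple components. Your approach instead exploits the transitivity of $G$ on the base of $\pi_2$: all fibers of $\pi_2$ are projectively equivalent, and a single explicit Hessian computation at $c_0=(0{:}0{:}1)$ shows the fiber is an irreducible cubic hypersurface in $\mathbb P^8$; irreducibility of $X$ then follows from the surjective morphism $G\times\pi_2^{-1}(c_0)\to X$. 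Your route is shorter and avoids the external monodromy input entirely; the paper's route, on the other hand, develops the fiber structure of $\pi_9$ and the stratification of $\mathbb P(U)$ by orbit types, information that is reused in the subsequent construction of the relative section and the proof of Theorem~\ref{main2}. Your final remark about taking reduced structures is apt and handles the only potential subtlety.
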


 The proof of Theorem \ref{main1} is given in Subsection \ref{pr1}.

  \begin{theorem}\label{main2}\

  \begin{enumerate}[\hskip 4.2mm\rm(a)]
  \item The algebraic varieties
  $X$ and $PG  \times^{{\rm Hes}_{\boldsymbol{i},\boldsymbol{j}}} \ell$ are $PG$-equiva\-riantly birationally iso\-morphic.
  \item  The homogeneous fiber space
$PG  \times^{{\rm Hes}_{\boldsymbol{i},\boldsymbol{j}}} \ell$
over $PG/{\rm Hes}_{\boldsymbol{i},\boldsymbol{j}}$ with fiber $\ell$ is the
  projectivization of a homogeneous vector bundle
  over $PG/{\rm Hes}_{\boldsymbol{i},\boldsymbol{j}}$ of rank $2$.
  \end{enumerate}
 \end{theorem}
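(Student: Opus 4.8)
The plan is to prove (a) by exhibiting an explicit $PG$-equivariant birational map and checking that it has degree $1$, and to prove (b) by realizing the fibre $\ell$ linearly. For (a), write $K:=\mathrm{Hes}_{\boldsymbol{i},\boldsymbol{j}}$ and let $\ell^{\circ}\subseteq\ell$ be the open, $\mathrm{Hes}$-stable locus of those $\lambda$ for which the cubic curve $C(f_\lambda)$ of the Hesse pencil is smooth ($f_\lambda\in U\setminus\{0\}$ being a defining form for the $\lambda$-th member). By the classical description of the Hesse pencil recorded in Lemma \ref{proC}, every such $C(f_\lambda)$ has $\mathcal F$ as its set of flexes, so $\bigl(p_U(f_\lambda),\,t_{\boldsymbol{i},\boldsymbol{j}}\bigr)\in X$; moreover $t_{\boldsymbol{i},\boldsymbol{j}}$ is fixed by every $h\in K$, which permutes the pencil and carries $C(f_\lambda)$ to $C(f_{h\cdot\lambda})$. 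Hence $\lambda\mapsto\bigl(p_U(f_\lambda),t_{\boldsymbol{i},\boldsymbol{j}}\bigr)$ is a $K$-equivariant morphism $\ell^{\circ}\to X$, which, fed into the homogeneous-fibre-space construction of Subsection \ref{hfs}, yields a $PG$-equivariant morphism $\Phi\colon PG\times^{K}\ell^{\circ}\to X$, i.e. a $PG$-equivariant rational map $PG\times^{K}\ell\dashrightarrow X$.

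Next I would check that $\Phi$ is birational. For dominance: a general point of $X$ is a pair $(C,c)$ with $C$ a smooth plane cubic and $c$ a flex of it (by Theorem \ref{main1} and Lemma \ref{S}(b) this locus is dense in $X$); picking $g_1\in PG$ that carries $C$ into the Hesse pencil (Hesse normal form), the point $g_1\cdot c$ is a flex of $g_1\cdot C$, hence lies in $\mathcal F$, and since $\mathrm{Hes}$ acts transitively on $\mathcal F$ we may compose with $h\in\mathrm{Hes}$ so that $hg_1\cdot c=t_{\boldsymbol{i},\boldsymbol{j}}$ while $hg_1\cdot C$ is still a pencil member; thus $(C,c)$ lies in the image of $\Phi$. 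For generic injectivity: if $\Phi([g,\lambda])=\Phi([g',\lambda'])$ with $C(f_\lambda)$ smooth, set $u:=(g')^{-1}g$; then $u\cdot C(f_\lambda)=C(f_{\lambda'})$ and $u\cdot t_{\boldsymbol{i},\boldsymbol{j}}=t_{\boldsymbol{i},\boldsymbol{j}}$. Since a smooth Hesse-pencil member has flex set exactly $\mathcal F$, the first equality gives $u\cdot\mathcal F=\mathcal F$, so $u\in N_{PG,\mathcal F}=\mathrm{Hes}$, hence $u\in\mathrm{Hes}_{\boldsymbol{i},\boldsymbol{j}}=K$, and then $C(f_{\lambda'})=C(f_{u\cdot\lambda})$ forces $\lambda'=u\cdot\lambda$; therefore $[g',\lambda']=[gu^{-1},u\cdot\lambda]=[g,\lambda]$. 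As $X$ is irreducible (Theorem \ref{main1}) and $PG\times^{K}\ell$ is irreducible (a fibre bundle over the irreducible $PG/K$ with fibre $\mathbb P^1$), a dominant generically injective morphism between them over $\mathbb C$ is birational, which proves (a).

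For (b), the plan is to observe that $\mathrm{SL}_3(\mathbb C)$ acts linearly on $U={\sf S}^3(V^*)$ with its center $Z$ acting trivially (each degree-$3$ monomial is scaled by $\varepsilon^{\mp3}=1$), so this is a genuine linear action of $PG$, restricting to one of $\mathrm{Hes}\subseteq PG$. The Hesse pencil is precisely the pencil of cubics vanishing on $\mathcal F$, so $\ell=p_U(W\setminus\{0\})$ for the two-dimensional linear subspace $W\subseteq U$ of forms vanishing on the nine points of $\mathcal F$ — for instance $W=\langle x_0^3+x_1^3+x_2^3,\ x_0x_1x_2\rangle$. Since $K=\mathrm{Hes}_{\boldsymbol{i},\boldsymbol{j}}$ preserves $\mathcal F$, it preserves $W$; thus $W$ is a $2$-dimensional linear $K$-module with $\mathbb P(W)=\ell$ as $K$-varieties, the associated homogeneous vector bundle $\mathcal E:=PG\times^{K}W\to PG/K$ has rank $2$, and $\mathbb P(\mathcal E)=PG\times^{K}\mathbb P(W)=PG\times^{K}\ell$, which is exactly (b).

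The main obstacle is not in the assembly above but in the geometric input it relies on: that every smooth plane cubic is $PG$-conjugate to a member of the Hesse pencil, that all smooth pencil members share the single flex set $\mathcal F$, and that $\mathrm{Hes}=N_{PG,\mathcal F}$ is transitive on $\mathcal F$ with stabilizer $K\cong\mathrm{SL}_2(\mathbb F_3)$. These are the classical facts organized in Lemma \ref{proC}; granting them, what remains is careful bookkeeping — in particular verifying that $\ell^{\circ}$ is open, dense and $\mathrm{Hes}$-stable and that $\Phi$ is regular there, and keeping track of the distinction between the rational map $PG\times^{K}\ell\dashrightarrow X$ and its restriction to the locus where it is a morphism.
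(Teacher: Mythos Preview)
Your proof is correct. For (b) it coincides with the paper's argument (the $2$-dimensional $K$-module is the paper's $L$, your $W$). For (a) you take a more direct route: you construct the map $\Phi$ explicitly and verify dominance and generic injectivity by hand. The paper instead develops the general machinery of \emph{relative sections} (Definition~\ref{relsec}, Lemmas~\ref{rese} and~\ref{55}), proves that $\ell$ is a relative section for the $PG$-action on $\mathbb{P}(U)$ with normalizer $\mathrm{Hes}$ (Theorem~\ref{PU}), lifts this along $\pi_9\colon X\to\mathbb{P}(U)$ via Lemma~\ref{55} to conclude that $\ell\times t_{\boldsymbol{i},\boldsymbol{j}}$ is a relative section for the $PG$-action on $X$ with normalizer $\mathrm{Hes}_{\boldsymbol{i},\boldsymbol{j}}$, and then invokes Lemma~\ref{rese}(e) for the birational conclusion. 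Your dominance and injectivity checks are exactly the verification of properties $(\mathrm{s}_1)$ and $(\mathrm{s}_2)$ for this particular section, so the two arguments are the same idea packaged differently: yours is shorter and self-contained, while the paper's abstract framework isolates precisely which hypotheses are needed and is set up to be reused (e.g.\ the lifting Lemma~\ref{55} handles the passage from $\mathbb{P}(U)$ to $X$ uniformly, including the identification of the $21$ irreducible components of $\pi_9^{-1}(\ell)$ and the selection of the regular ones).
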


 The proof of Theorem \ref{main2} is given in Subsection \ref{pr2}.

The next  Theorem \ref{main3} was announced in \cite{Pop3}, where unirationa\-lity of $X$ was proved.

 \begin{theorem}\label{main3}
 The algebraic variety
 $X$ is rational.
  \end{theorem}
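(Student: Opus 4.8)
The plan is to derive Theorem~\ref{main3} from Theorem~\ref{main2} by a short chain of birational reductions of homogeneous spaces, the last link being a Noether-type computation over a rational base. Set $K:={\rm Hes}_{\boldsymbol{i},\boldsymbol{j}}$ and $Y:=PG/K$; since $K$ is finite, $\dim Y=8$. By Theorem~\ref{main2}(a), $X$ is $PG$-equivariantly birationally isomorphic to $PG\times^{K}\ell$, and by Theorem~\ref{main2}(b) the latter is the projectivization $\mathbb P(\mathcal E)$ of a rank~$2$ homogeneous vector bundle $\mathcal E$ over $Y$. The structure morphism $\mathbb P(\mathcal E)\to Y$ has generic fibre $\mathbb P^1_{\mathbb C(Y)}$ (the fibre over the generic point of a genuine rank~$2$ vector bundle), which is $\mathbb C(Y)$-rational, so $\mathbb C(X)\cong\mathbb C(\mathbb P(\mathcal E))\cong\mathbb C(Y)(t)$. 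Hence it suffices to prove that $Y=PG/K$ is rational.

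First I would reduce $PG/K$ to a homogeneous space of a $2$-dimensional linear group. By construction $K$ is the $\mathrm{Hes}$-stabilizer of the point $t_{\boldsymbol{i},\boldsymbol{j}}\in\mathbb P(V)$, so $K$ is contained in the maximal parabolic subgroup $P:=PG_{t_{\boldsymbol{i},\boldsymbol{j}}}$, for which $PG/P\cong\mathbb P(V)=\mathbb P^2$. The canonical morphism $PG\to PG/P$ is a Zariski-locally trivial principal $P$-bundle — it is trivial over the big cell $\mathbb A^2\subset\mathbb P^2$, where its restriction is $\mathbb A^2\times P$ compatibly with the $K$-action by right translations on $P$ — so passing to $K$-orbits there gives a dense open subset $\mathbb A^2\times(P/K)$ of $Y$. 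Next, let $P=R_u(P)\rtimes L$ be a Levi decomposition, with unipotent radical $R_u(P)\cong\mathbb C^2$ and $L\cong {\rm GL}_2(\mathbb C)$. As $K$ is finite, hence linearly reductive, the Levi--Mostow theorem allows us (after conjugating by an element of $R_u(P)$) to assume $K\subseteq L$; then in the variety isomorphism $P\cong R_u(P)\times L$ the group $K$ acts only on the second factor, so $P/K\cong\mathbb C^2\times({\rm GL}_2(\mathbb C)/K)$. Altogether $Y$ is birational to $\mathbb A^4\times({\rm GL}_2(\mathbb C)/K)$, and everything comes down to the rationality of ${\rm GL}_2(\mathbb C)/K$ for the finite subgroup $K\subset {\rm GL}_2(\mathbb C)$.

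For the last step I would invoke the no-name lemma. Embed ${\rm GL}_2(\mathbb C)$ as the dense open orbit of the left-multiplication action on $M_2(\mathbb C)\cong\mathbb C^4$; this embedding is $K$-equivariant, so ${\rm GL}_2(\mathbb C)/K$ is dense open in $M_2(\mathbb C)/K$. As a $K$-module $M_2(\mathbb C)\cong U\oplus U$, where $U=\mathbb C^2$ is the restriction to $K$ of the standard ${\rm GL}_2$-module; this is a faithful $K$-module, and $K$ acts generically freely on $U$ because every non-identity element of ${\rm GL}_2(\mathbb C)$ fixes only a proper linear subspace of $\mathbb C^2$. By the no-name lemma (the generic fibre of $(U\oplus U)/K\to U/K$ is a twisted form of $\mathbb A^2$, hence trivial by Hilbert~90 for ${\rm GL}_2$), $(U\oplus U)/K$ is birational to $(U/K)\times U=(\mathbb C^2/K)\times\mathbb C^2$. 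Finally $\mathbb C^2/K$ is a normal unirational surface, hence rational by Castelnuovo's theorem. Therefore ${\rm GL}_2(\mathbb C)/K$, and with it $P/K$, $Y=PG/K$, and $X$, is rational.

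The argument is mostly a sequence of standard birational identifications, so I do not expect a single hard obstacle; the two points requiring genuine (if routine) care are the structural inputs that make the chain work — that $K$ lies in a parabolic subgroup of $PG$, which is immediate here because $K$ fixes a flex, and that $K$ can be conjugated into a Levi subgroup of that parabolic — while the final reduction relies on the no-name lemma and on rationality of surface quotient singularities. The real discipline needed is to keep every reduction a genuine birational equivalence over a base already known to be \emph{rational}, so that rationality (and not merely unirationality, already established in \cite{Pop3}, or stable rationality) is what propagates back to $X$.
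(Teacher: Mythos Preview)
Your argument is correct. The reduction of rationality of $X$ to rationality of $Y=PG/K$ via Theorem~\ref{main2} matches the paper's, but your proof that $Y$ is rational takes a genuinely different route. The paper proves the more general Theorem~\ref{ratY} (rationality of $G/K$ for \emph{every} finite subgroup $K$ of $\mathrm{SL}_3$ or $\mathrm{PSL}_3$): it lets a parabolic $P$ act on $G/K$, checks this action is generically free, and uses that $P$ is special in Serre's sense to split off a rational factor $P$, leaving a $2$-dimensional unirational quotient to which Castelnuovo applies. You instead exploit the feature specific to $K={\rm Hes}_{\boldsymbol{i},\boldsymbol{j}}$ that it fixes the flex $t_{\boldsymbol{i},\boldsymbol{j}}$, hence sits inside a parabolic $P$; Zariski triviality of $PG\to PG/P$ over the big cell, Mostow's conjugacy theorem to push $K$ into a Levi $L\cong\mathrm{GL}_2$, and the no-name lemma applied to $\mathrm{GL}_2\subset M_2\cong U\oplus U$ then reduce everything to $\mathbb C^2/K$, where Castelnuovo finishes. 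Your path is more elementary and explicit for this particular $K$ and avoids the machinery of special groups, at the price of not yielding the general statement Theorem~\ref{ratY}; the paper's path is less dependent on the accident that $K$ stabilizes a point of $\mathbb P(V)$ and gives a result of independent interest. Both arguments ultimately rest on Castelnuovo applied to a $2$-dimensional unirational quotient.
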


 The proof of Theorem \ref{main3} is given in Subsection \ref{pr3}.

 \vskip 3mm

 \noindent {\it Conventions and notation}

 \vskip 1mm

 We use the standard notation and terminology from
 \cite{Bor}, \cite{Sha}, \cite{PV}.

 Unless otherwise stated, all topological terms refer to the Zariski topo\-logy. If $S$ is a subset of an algebraic variety $Y$, then $\overline S$ is the closure of $S$ in $Y$ (whenever this notation is used, it is either explicitly specified or clear from the context what $Y$ for $\overline S$ is meant).

 Given an action $\alpha\colon R\times A\to A$ of a group $R$ on a set $A$ and the elements
 $r\in R$, $a\in A$, we denote $\alpha(r, a)$ by $r\cdot a$. Given a subset $B$ of $A$, the $R$-normalizer and $R$-centralizer of $B$ are respectively the following subgroups of $R$:
 \begin{equation}\label{NZ}
 \begin{split}
 N_{R, B}&:=\{r\in R\mid r\cdot B\subseteq B\},\\
 Z_{R, B}&:=\{r\in R\mid r\cdot b=b \;\mbox{ for every $b\in B$}\}.
 \end{split}
 \end{equation}

If $R$ is an algebraic group, $A$ is an algebraic variety, and $\alpha$ is a morphism, then $Z_{R, B}$ is closed and,
provided
$B$ is closed,
$N_{R, B}$ is closed, too (see \cite[Chap.\,I, 1.7]{Bor}).

$\mathbb C^*$ is the multiplicative group of $\mathbb C$.

${\rm Aff}(A)$ is the group of all affine transformations of a finite-dimensional affine space $A$ over some field.

${\rm SAff}(A)$ is the normal subgroup of ${\rm Aff}(A)$ consisting of all elements whose linear part has determinant $1$.

 \section{Irreducibility of $X$}

 \subsection{\it Some group actions}
  Apart from the actions of $G$ on $V$, $U$, ${\mathbb P}(V)$, ${\mathbb P}(U)$, below we
consider
the action of
$\mathbb C^*$ on $U$ by scalar multiplica\-tion. This action commutes with that of $G$.

The following Lemma \ref{class} summarizes some known
facts about $G$-orbits and their closures in $U$
that we need.

\begin{lemma}[{{\rm \cite{Poi}, \cite[Chap.\,1, \S7]{Kra}}}]
\label{class} Let $f\in U$ be a nonzero form.
\begin{enumerate}[\hskip 4.2mm\rm(a)]
\item
$C(f)$
is not an elliptic curve
if and only if the orbit $G\cdot f$ contains a form $h$ from the following Table $1$:
\end{enumerate}

\vskip 3mm
\centerline{\sc Table $1$}

\vskip 1mm
\begin{center}
\begin{tabular}{c||c|c}
$h$&$\dim G\cdot h$&$C(h)$
\\[2pt]
 \hline
 \hline
  $h_3:=x_0^3$&$3$&{\rm \footnotesize line}\\
  \hline
  $h_5:=x_0^2x_1$&$5$&$\begin{smallmatrix}
  \mbox{{\rm \footnotesize two lines}}
  \end{smallmatrix}$\\
 \hline
$h_6:=x_0x_1(x_0+x_1)$&$6$&
$\begin{smallmatrix}
\\
  \mbox{{\rm \footnotesize three lines}}\\
  \mbox{{\rm \footnotesize passing through}}\\
  \mbox{{\rm \footnotesize a point}}\\
  \\[-1mm]
  \end{smallmatrix}$
\\
 \hline
 $h_{\mu, 6}:=\mu x_0x_1x_2$,\;$\mu\in \mathbb C^*$&$6$&
 $\begin{smallmatrix}
 \\
  \mbox{{\rm \footnotesize three lines}}\\
  \mbox{{\rm \footnotesize with empty}}\\
  \mbox{{\rm \footnotesize intersection}}\\
  \\[-1mm]
  \end{smallmatrix}$
  \\
 \hline
$h_7:=(x_0^2-x_1x_2)x_1$&$7$&
$\begin{smallmatrix}
\\
  \mbox{{\rm \footnotesize conic and }}\\
  \mbox{{\rm \footnotesize its tangent line}}\\
  \\[-1mm]
  \end{smallmatrix}$
\\
 \hline
$h_{\mu, 7}:=\mu (x_0^2-x_1x_2)x_0$,\;$\mu\in \mathbb C^*$&$7$&
$\begin{smallmatrix}
\\
  \mbox{{\rm \footnotesize conic and }}\\
  \mbox{{\rm \footnotesize nontangent line}}\\
  \\[-1mm]
  \end{smallmatrix}$
  \\
 \hline
$h_8:=x_1^2x_2-x_0^3$&$8$&
$\begin{smallmatrix}
  \mbox{{\rm \footnotesize cuspidal cubic}}
  \end{smallmatrix}$
  \\
 \hline
$h_{\mu,8}:=\mu (x_1^2x_2-x_0^3-x_0^2x_2)$,\;$\mu\in \mathbb C^*$&$8$&
$\begin{smallmatrix}
  \mbox{{\rm \footnotesize nodal cubic}}
  \end{smallmatrix}$\\
\end{tabular}
\end{center}

\vskip 4mm

\begin{enumerate}[\hskip 4.2mm\rm(b)]
\item[\rm(b)]
If $C(f)$
is not an elliptic curve, then such $h$ is uniquely deter\-mined by $f$.

\item[\rm(c)]
All the pairs of $G$-orbits $\mathcal O_1$, $\mathcal O_2$ in $U$ such that $\overline{\mathcal O_1}\supset \mathcal O_2$ and $\mathcal O_1\neq  \mathcal O_2$
are described by the following inclusions:
\begin{gather*}
\overline{G\cdot h_{\mu, 8}}\supset \overline{G\cdot h_{2 \mu, 7}} \supset G\cdot h_{2 \mu, 6},\\
\overline{G\cdot h_{8}}\supset \overline{G\cdot h_{7}} \supset \overline{G\cdot h_{6}}
\supset \overline{G\cdot h_{5}}\supset \overline{G\cdot h_{3}}\ni 0.
\end{gather*}

\item[\rm(d)]
If
$C(f)$ is
an elliptic curve,
then $\dim G\cdot f=8$.

\item[\rm(e)]
$\dim G\cdot f<8$ if and only if
$f$ is a reducible form.
\end{enumerate}
\end{lemma}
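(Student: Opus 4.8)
This is a classical statement about the action of $G=\mathrm{SL}_3(\mathbb C)$ on the space $U$ of ternary cubic forms, contained in \cite{Poi} and \cite[Chap.\,1, \S7]{Kra}; I would reconstruct it from the projective classification of plane cubic curves together with a careful bookkeeping of the scalar factor when passing from $\mathrm{PGL}_3$ to $\mathrm{SL}_3$. The starting point is the classical list: over $\mathbb C$, a nonzero ternary cubic defines either a smooth cubic (an elliptic curve) or a curve of exactly one of eight projective types — a triple line, a double line plus a line, three concurrent lines, three lines in general position (a triangle), a smooth conic plus a tangent line, a smooth conic plus a secant line, an irreducible cuspidal cubic, or an irreducible nodal cubic — and within each type there is a single projective normal form. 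Since the eight types are pairwise distinguished by projective invariants (number and multiplicities of the irreducible components and the nature of the singularities), two forms define projectively equivalent curves if and only if they lie in a common $\mathrm{GL}_3$-orbit in $U$.

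To pass from $\mathrm{GL}_3$-orbits to $G$-orbits I would use that $\mathrm{GL}_3=G\cdot(\mathbb C^{\ast}I)$ and that the scalar subgroup acts on $U=\mathsf S^3(V^{\ast})$ through the character $\det^{-1}\colon\mathrm{GL}_3\to\mathbb C^{\ast}$; hence $\mathrm{GL}_3\cdot f=\mathbb C^{\ast}\cdot(G\cdot f)$ for the scalar action of $\mathbb C^{\ast}$, and for a normal form $h$ the whole scaling $\mathbb C^{\ast}$ gets absorbed into the single orbit $G\cdot h$ precisely when $\det\bigl(\mathrm{Stab}_{\mathrm{GL}_3}(h)\bigr)=\mathbb C^{\ast}$. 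A short computation with diagonal one-parameter subgroups (plus a suitable torus for the conic-plus-tangent and cuspidal normal forms) shows that this holds for the five types $h_3,h_5,h_6,h_7,h_8$ — for $h_3,h_5,h_6$ simply because these forms do not involve all three variables, and for $h_7,h_8$ via an explicit torus that scales the form by an arbitrary nonzero constant — so each of these gives a single orbit; for the triangle, the conic-plus-secant line, and the nodal cubic the determinant of the form-stabiliser is only a finite subgroup of $\mathbb C^{\ast}$, so the scaling genuinely survives and one gets the one-parameter families $h_{\mu,6},h_{\mu,7},h_{\mu,8}$. This proves (a), and, combined with the distinctness of the eight types, (b) (the object ``$h$'' attached to $f$ being unique as a $G$-orbit). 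For the dimensions I would compute $\dim G\cdot h=8-\dim\mathrm{Stab}_G(h)$ by exhibiting the stabiliser in each case: for the six reducible normal forms the stabiliser contains a positive-dimensional subgroup — a parabolic-type subgroup fixing the supporting line(s) for $h_3,h_5$, the subgroup fixing the relevant pencil of lines for $h_6$ and $h_{\mu,6}$ (a maximal torus in the triangle case), a one-parameter unipotent group of translations along the conic for $h_7$, and the one-parameter torus fixing the conic together with its two intersection points with the line for $h_{\mu,7}$ — so $\dim G\cdot h\le 7$; for an irreducible cubic (smooth, nodal or cuspidal) the $\mathrm{PGL}_3$-stabiliser of the curve is finite — for a smooth cubic it is generated by the translations by $3$-torsion points and the finite automorphism group fixing an origin, for the nodal form it is finite for the same reason with $\mu_3$ in place of the $3$-torsion, and for the cuspidal form a one-parameter torus does survive in the curve-stabiliser but it scales the form and hence leaves the form-stabiliser finite — so the form-stabiliser is finite and $\dim G\cdot f=8$. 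This yields (d), and, together with the reducible computations, (e).

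Finally, for (c) I would exhibit for each claimed inclusion $\overline{\mathcal O_1}\supset\mathcal O_2$ an explicit one-parameter degeneration — a representative $f_1\in\mathcal O_1$ (a normal form) and a cocharacter $\lambda$ of the maximal torus with $\lim_{t\to 0}\lambda(t)\cdot f_1\in\mathcal O_2$ — namely the nodal form degenerates to the conic-plus-secant form and then to the triangle (here the factor $2$ in $h_{2\mu,7},h_{2\mu,6}$ is forced by the chosen normal forms), and the cuspidal form degenerates in turn to the conic-plus-tangent form, to three concurrent lines, to a double line plus a line, to a triple line, and to $0$; transitivity then produces all the listed inclusions. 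Completeness — that there are no further inclusions — follows because a proper orbit-closure inclusion strictly decreases dimension (so the two dimension-$8$ orbits $G\cdot h_8$ and $G\cdot h_{\mu,8}$ are incomparable and the two chains meet only in $0$) and because the boundary of each orbit closure is a union of strictly lower-dimensional orbits which by the completeness of the classification in (a) must be among the eight listed types; computing which limits actually occur then pins down exactly the stated ones, scalars included.

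The routine classical parts are the classification and the dimension count; the part that requires genuine care is the scalar bookkeeping running through the whole argument — deciding for which normal forms the scaling $\mathbb C^{\ast}$ is absorbed into $\mathrm{SL}_3$, and then tracking how the surviving parameter $\mu$ transforms under the degenerations of (c) (which is what produces the coefficients $2\mu$) — together with making sure, by invoking the completeness of the classification rather than a case-free argument, that no unlisted orbit sits between two listed ones.
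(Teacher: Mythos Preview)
The paper does not give a proof of this lemma: it is presented as a summary of classical facts, with attribution to \cite{Poi} and \cite[Chap.\,1, \S7]{Kra}, so there is no argument in the paper to compare yours against.

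Your reconstruction is a correct outline of the standard proof, and you are right that the scalar bookkeeping (deciding when $\mathbb C^{\ast}$ is absorbed into the $\mathrm{SL}_3$-orbit, and tracking how $\mu$ transforms under degeneration) is the only part requiring care beyond the textbook projective classification. Two places could be tightened. First, in part~(c) the exclusion of unlisted inclusions is cleaner via invariant theory than via an exhaustive limit computation: the forms $h_3,h_5,h_6,h_7,h_8$ lie in the nullcone of the $G$-invariants on $U$, while each $h_{\mu,6},h_{\mu,7},h_{\mu,8}$ does not, so no orbit from one chain can appear in the closure of an orbit from the other; within a single fibre of $\pi_U$ the dimension drop then forces exactly the listed chain. (The paper itself exploits precisely this fibre structure of the categorical quotient in its proof of Lemma~\ref{S}.) Second, your justification of finiteness of the form-stabiliser for the nodal cubic (``$\mu_3$ in place of the $3$-torsion'') is a bit glib --- the smooth locus is $\mathbb G_m$, whose abstract automorphism group is infinite, so one has to check which automorphisms are actually induced by $\mathrm{PGL}_3$; the quickest clean argument is the dimension count (the nodal locus is open in the discriminant hypersurface of $\mathbb P(U)$, hence $8$-dimensional, forcing a finite stabiliser).
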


In view of this lemma,
\begin{equation*}
\underset{f\in U}{\max}\dim G\cdot f=8.
\end{equation*}
By \cite[1.4]{PV}, this and $\dim G=8$ yield
\begin{corollary}\label{dim} The $G$-stable and $\mathbb C^*$-stable set
\begin{equation}\label{pos}
\{f\in U \mid
\mbox{\rm the $G$-stabilizer of $f$ is finite}\}
\end{equation}
is nonempty
and open
in $U$.
\end{corollary}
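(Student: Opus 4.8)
The plan is to deduce everything from the orbit-dimension formula $\dim G\cdot f=\dim G-\dim G_f$ together with upper semicontinuity of the stabilizer dimension (equivalently, lower semicontinuity of the orbit dimension). First I would record the elementary observation that, since $G$ is an $8$-dimensional algebraic group and for each nonzero $f\in U$ the stabilizer $G_f$ is a closed subgroup, one has $\dim G\cdot f=8-\dim G_f$; in particular $G_f$ is finite — i.e. $\dim G_f=0$ — exactly when $\dim G\cdot f=8$. Thus the set \eqref{pos} coincides with $\{f\in U\mid \dim G\cdot f=8\}$.

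By Lemma \ref{class} (summarized in the displayed remark preceding the Corollary), $8$ is the maximal dimension of a $G$-orbit in $U$, and by Lemma \ref{class}(d) this maximum is attained, e.g. on every $f$ with $C(f)$ an elliptic curve; hence \eqref{pos} is nonempty. For openness I would invoke \cite[1.4]{PV}: the function $f\mapsto\dim G\cdot f$ is lower semicontinuous on $U$, so the locus where it equals its maximal value $8$ is open, and by the previous paragraph this locus is precisely \eqref{pos}.

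It remains to check the two stability assertions, which are immediate from the shape of the actions. Since $G$ acts on $U$ through a linear representation, $g\cdot(\lambda f)=\lambda(g\cdot f)$ for all $g\in G$, $\lambda\in\mathbb C^*$, so $G_{\lambda f}=G_f$; finiteness of the stabilizer is therefore unaffected by scaling, proving $\mathbb C^*$-stability. Likewise, for $g_0\in G$ one has $G_{g_0\cdot f}=g_0G_fg_0^{-1}$, which is finite if and only if $G_f$ is, proving $G$-stability. (Alternatively, one could note that \eqref{pos} is $G$-stable and $\mathbb C^*$-stable simply because the orbit-dimension function is constant on $G$-orbits and, as $\mathbb C^*$ commutes with $G$, also along $\mathbb C^*$-orbits.)

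There is no real obstacle here: the only points requiring a word of care are the standard identification of a $0$-dimensional algebraic group with a finite one and the citation of the correct semicontinuity statement for orbit dimensions, both of which are routine.
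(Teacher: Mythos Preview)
Your argument is correct and is essentially the same as the paper's: the paper derives the Corollary directly from the displayed equality $\max_f\dim G\cdot f=8$, the equality $\dim G=8$, and the reference \cite[1.4]{PV}, which is exactly the semicontinuity/orbit-dimension reasoning you spell out. You simply make explicit the identification of \eqref{pos} with the locus of maximal orbit dimension and the (immediate) $G$- and $\mathbb C^*$-stability, which the paper leaves to the reader.
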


If $f\in U$, let $H_f$ be the form on $V$ obtained from $H$ by replacing every $\alpha_{i_1i_2i_3}$
with $\alpha_{i_1i_2i_3}(f)\in\mathbb C$ in the right-hand side of \eqref{H}.

For $f\neq 0$, we put
\begin{equation}\label{Ff}
{\rm Fl}\big(C(f)\big):=C(f)\cap \{c\in {\mathbb P}(V)\mid H_f(c)=0\}.
\end{equation}
Since $C(f)$ is an algebraic curve,
\begin{equation}\label{<2}
\dim {\rm Fl}\big(C(f)\big)\leqslant 1.
\end{equation}

In view of the known property of Hessian \cite[3.3.13]{Spr},  we have
$H_{g\cdot f}(g\cdot v)=H_f(v)$ for every $g\in G$, $f\in U$, $v\in V$.
Therefore,
\begin{equation}\label{gFl}
{\rm Fl}\big(C({g\cdot f})\big)=g\cdot {\rm Fl}\big(C(f)\big).
\end{equation}

From \eqref{gFl} it follows that $X$ is a $G$-stable subset of ${\mathbb P}(U)\times {\mathbb P}(V)$.

\begin{lemma}\label{Fl1} For every nonzero form $f\in U$, the following
are
equivalent:
\begin{enumerate}[\hskip 4.2mm\rm(a)]
\item $\dim {\rm Fl}\big(C(f)\big)=1$;
\item the $G$-stabilizer of $f$ is infinite;
\item the orbit $G\cdot f$ contains one {\rm(}and only one{\rm)} of the forms
$$h_{\mu, 7}, h_7, h_{\mu, 6}, h_6, h_5, h_3;$$
\item $f$ is reducible.
\end{enumerate}
\begin{proof} Let $f$ be a form $h$ from Table $1$. Then the following Table 2 holds, in which $c$ denotes the number of irreducible components of the curve $C(f)$ lying in ${\rm Fl}(C(f))$:

\eject

\centerline{\sc Table $2$}

\vskip 3mm
\begin{center}
\begin{tabular}{c||c|c|c}
$f$&$H_f$&$\dim {\rm Fl}\big(C(f)\big)$&$c$
\\[2pt]
\hline
 \hline
 $h_3$&$0$&$1$&$1$ \\
 \hline
$ h_5$&$0$&$1$&$2$
 \\
 \hline
 $h_6$&$0$&$1$&$3$
 \\
 \hline
 $h_{\mu,6}$& $2\mu^3x_0x_1x_2$&$1$&$3$
 \\
 \hline
 $h_7$&$-8x_1^3$&$1$&$1$
 \\
 \hline
 $h_{\mu,7}$& $-2\mu^3(3x_0^2+x_1x_2)x_0$&$1$&$1$
 \\
 \hline
 $h_8$&$24x_0x_1^2$&$0$&$0$
 \\
 \hline
 $h_{\mu,8}$&$8{\mu}^3(-x_0^2x_2+3x_0x_1^2+x_1^2x_2)$&$0$&$0$
 \end{tabular}
\end{center}

\vskip 1.5mm

\noindent Indeed, the second column of Table 2 is obtained from the first column of Table 1 and formulas \eqref{FH}, \eqref{H} by a straightforward simple computa\-tion. The third column of Table 2 is deduced from formulas \eqref{cubic}, \eqref{Ff} by analyzing solutions of a simple system of two polynomial equations in three variables.

If $C(f)$ is an elliptic curve,
then ${\rm Fl}\big(C(f)\big)$ is the set of all flexes of $C(f)$, hence a finite set of cardinality $9$, \cite[p.\,291, Cor.\,3]{BK}; whence  $\dim {\rm Fl}\big(C(f)\big)=0$. Now the claim follows from \eqref{gFl}, Lemma 1(a),(d),(e), and comparing the last columns of Tables\,1 and\,2.
\end{proof}
\end{lemma}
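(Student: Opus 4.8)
The plan is to route all four equivalences through Lemma \ref{class}, reducing every assertion about a general $f$ to one of the eight normal forms of Table 1. First I would settle (b) $\Leftrightarrow$ (d): by the orbit--stabilizer dimension formula $\dim G\cdot f=\dim G-\dim G_f=8-\dim G_f$, the $G$-stabilizer $G_f$ is infinite exactly when $\dim G\cdot f<8$, which by Lemma \ref{class}(e) happens precisely when $f$ is reducible.

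Next I would prove (c) $\Leftrightarrow$ (d). Reducibility of a cubic form is invariant under the linear substitutions defining the $G$-action, hence is constant on $G$-orbits; among the forms of Table 1 the six appearing in (c), namely $h_{\mu,7},h_7,h_{\mu,6},h_6,h_5,h_3$, are reducible, whereas $h_8$ and $h_{\mu,8}$ are irreducible. If $f$ is reducible then $C(f)$ is not an elliptic curve (a smooth plane cubic is irreducible), so by Lemma \ref{class}(a) the orbit $G\cdot f$ contains a form $h$ of Table 1, unique by Lemma \ref{class}(b); being $G$-equivalent to $f$, this $h$ is reducible, hence is one of the six. Conversely, if $G\cdot f$ contains one of the six reducible forms, then $f$ itself is reducible.

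It remains to prove (a) $\Leftrightarrow$ (c), and this is where Tables 1 and 2 are used. If $C(f)$ is an elliptic curve, then ${\rm Fl}(C(f))$ is its set of nine flexes \cite[p.\,291, Cor.\,3]{BK}, so $\dim{\rm Fl}(C(f))=0$ and (a) fails, while (c) fails too since by Lemma \ref{class}(a) the orbit $G\cdot f$ meets no row of Table 1. If $C(f)$ is not an elliptic curve, let $h$ be its Table 1 representative; by \eqref{gFl} one has $\dim{\rm Fl}(C(f))=\dim{\rm Fl}(C(h))$, so (a) reduces to a statement about $h$ alone. One then fills in Table 2: for each of the eight forms compute the Hessian $H_h$ from \eqref{FH}, \eqref{H} (a routine $3\times3$ determinant), and determine $\dim\big(C(h)\cap\{H_h=0\}\big)$ by analyzing the resulting polynomial system; the outcome is $\dim{\rm Fl}(C(h))=1$ for the six reducible forms and $0$ for $h_8,h_{\mu,8}$. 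Comparing the last columns of the two tables against the list in (c) yields (a) $\Leftrightarrow$ (c), and combining the three equivalences completes the proof.

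The main obstacle is the case-by-case content of Table 2: one must carry out eight Hessian determinants and, more delicately, decide in each case whether $C(h)\cap\{H_h=0\}$ is one-dimensional or finite. For instance, for $h_7=(x_0^2-x_1x_2)x_1$ the Hessian turns out to be, up to a scalar, $x_1^3$, so the intersection is the double line $\{x_1=0\}$ and is a curve, whereas for $h_8=x_1^2x_2-x_0^3$ the Hessian $24x_0x_1^2$ meets $C(h_8)$ in only finitely many points; and for the one-parameter families $h_{\mu,6},h_{\mu,7},h_{\mu,8}$ one should check that the answer is independent of $\mu\in\mathbb C^*$. These verifications are elementary but must be done carefully.
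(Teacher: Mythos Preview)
Your proposal is correct and follows essentially the same route as the paper: reduce to the normal forms of Table~1 via Lemma~\ref{class}, compute the Hessians to fill in Table~2, handle the elliptic case separately, and invoke $G$-equivariance \eqref{gFl}. The only difference is expository---you spell out the chain of equivalences $(\mathrm{b})\Leftrightarrow(\mathrm{d})$, $(\mathrm{c})\Leftrightarrow(\mathrm{d})$, $(\mathrm{a})\Leftrightarrow(\mathrm{c})$ explicitly, whereas the paper compresses this into a single sentence citing Lemma~\ref{class}(a),(d),(e) and the comparison of Tables~1 and~2.
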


\begin{lemma}\label{S} \

\begin{enumerate}[\hskip 4.2mm\rm (a)]
\item\label{Sa} The set
\begin{equation*}
J:=\{f\in U\setminus \{0\}\mid \dim {\rm Fl}\big(C(f)\big)=1\}
\end{equation*}
is $G$-stable, $\mathbb C^*$-stable, closed in $U\setminus \{0\}$,
and $8$-dimensional.
\item\label{Sb} There is an irreducible homogeneous
$G$-invariant polynomial $\Delta\in \mathbb C[U]$ such that for every form $f\in U$,
the following properties are equivalent:
    \begin{enumerate}[\hskip 0mm\rm(i)]
    \item $\Delta(f)\neq 0$,
        \item $C(f)$ is an elliptic curve.
    \end{enumerate}
    \end{enumerate}
\end{lemma}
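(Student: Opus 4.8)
The plan is to deduce part~(a) from Lemma~\ref{Fl1} and Corollary~\ref{dim}, and in part~(b) to take for $\Delta$ the discriminant of ternary cubic forms.

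For~(a): the $G$-stability of $J$ is immediate from \eqref{gFl}, and the $\mathbb C^{*}$-stability holds because $C(\lambda f)=C(f)$ while $H_{\lambda f}=\lambda^{3}H_{f}$ (the form $H$ of \eqref{H} is the determinant of a $3\times 3$ matrix whose entries are linear in the $\alpha_{i_0i_1i_2}$, hence is homogeneous of degree~$3$ in them), so that ${\rm Fl}\big(C(\lambda f)\big)={\rm Fl}\big(C(f)\big)$ for every $\lambda\in\mathbb C^{*}$. For closedness I would use the equivalence of (a) and (b) in Lemma~\ref{Fl1}: since $0$ has infinite $G$-stabilizer, the set \eqref{pos} is contained in $U\setminus\{0\}$ and equals $(U\setminus\{0\})\setminus J$, so $J\cup\{0\}$ is the complement in $U$ of the open set \eqref{pos}, hence closed in $U$; therefore $J$ is closed in $U\setminus\{0\}$. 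For the dimension, the equivalence of (a) and (d) in Lemma~\ref{Fl1} identifies $\overline{J}=J\cup\{0\}$ with the image of the multiplication morphism $\mu\colon V^{*}\times{\sf S}^{2}(V^{*})\to U$, $(\ell,q)\mapsto\ell q$. When $q$ is an irreducible quadratic form and $\ell\neq 0$, the factorization of $\ell q$ is unique up to a common nonzero scalar, so the fiber of $\mu$ over a general point of the image is the one-dimensional torus $\{(t\ell,t^{-1}q)\mid t\in\mathbb C^{*}\}$; by the theorem on the dimension of fibers $\dim\overline{J}=\dim\big(V^{*}\times{\sf S}^{2}(V^{*})\big)-1=9-1=8$, whence $\dim J=8$.

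For~(b): let $\Delta\in\mathbb C[U]$ be the discriminant of ternary cubic forms. It is homogeneous of degree~$12$, invariant under $G={\rm SL}_{3}(\mathbb C)$, and $\Delta(f)\neq 0$ if and only if the plane cubic $C(f)$ is smooth; as a smooth plane cubic is a curve of genus~$1$, and conversely every elliptic curve is realized by a smooth plane cubic, this is equivalent to $C(f)$ being an elliptic curve (in particular $\Delta$ vanishes on every form of Table~$1$, in agreement with Lemma~\ref{class}(a)). It remains to show $\Delta$ is irreducible in $\mathbb C[U]$. For this I would invoke the classical theorem of Aronhold that $\mathbb C[U]^{G}$ is a polynomial algebra $\mathbb C[I_{4},I_{6}]$ on two algebraically independent homogeneous invariants of degrees $4$ and $6$, together with the classical fact that $\Delta$ is proportional to a combination $aI_{6}^{2}+bI_{4}^{3}$ with $a\neq 0\neq b$. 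Suppose $\Delta=cQ^{k}$ with $Q\in\mathbb C[U]$ irreducible and $k\geqslant 2$. As $G$ is connected it fixes $Q$ up to a scalar, and as $G$ has no nontrivial characters $Q$ is $G$-invariant, so $Q\in\mathbb C[I_{4},I_{6}]$; since $\mathbb C[I_{4},I_{6}]$ has no nonconstant invariants of degree below $4$, we get $k\in\{2,3\}$ and $\deg Q\in\{6,4\}$, so $Q$ is a scalar multiple of $I_{6}$ or of $I_{4}$, forcing $\Delta$ to be proportional to $I_{6}^{2}$ or to $I_{4}^{3}$ — contradicting $a\neq 0\neq b$. Hence $k=1$ and $\Delta$ is irreducible.

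The main obstacle is the irreducibility of $\Delta$ in part~(b): all the remaining assertions reduce quickly to results already at hand (Lemma~\ref{Fl1}, Corollary~\ref{dim}) or to standard properties of the discriminant, whereas ruling out that $\Delta$ is a proper power genuinely requires structural input about the invariant ring of ternary cubics — or, alternatively, the classical fact that a generic pencil of plane cubics has exactly $12$ distinct (nodal) singular members, which shows that the restriction of $\Delta$ to a generic line of $\mathbb P(U)$ is separable.
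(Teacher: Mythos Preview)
Your argument is essentially correct but follows a genuinely different route from the paper. For the dimension in~(a) you parametrize the reducible cubics by the multiplication map $V^*\times{\sf S}^2(V^*)\to U$ and obtain $\dim J=9-1=8$ from a generic-fiber count; the paper instead passes through the categorical quotient $\pi_U\colon U\to U/\!\!/G\cong\mathbb A^2$, locates a curve $Y$ there with $\pi_U(J)\subseteq Y$, and gets $\dim J=\dim Y+7=8$ from the fiber dimensions of $\pi_U|_J$. For~(b) you take $\Delta$ to be the classical discriminant and prove irreducibility via Aronhold's $\mathbb C[U]^G=\mathbb C[I_4,I_6]$ together with the explicit shape $\Delta\sim aI_6^2+bI_4^3$; the paper instead \emph{defines} $\Delta:=\pi_U^*(\delta)$ where $(\delta)$ is the principal ideal of the curve $Y$, so irreducibility in $\mathbb C[U]^G$ is automatic and irreducibility in $\mathbb C[U]$ then follows from $G$ being connected without nontrivial characters. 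Your approach is more explicit and classical; the paper's is more structural, reusing for~(b) the quotient machinery already set up in~(a) and needing no detailed knowledge of the invariant ring beyond $U/\!\!/G\cong\mathbb A^2$.

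One lacuna in your~(b): writing ``Suppose $\Delta=cQ^k$ with $k\geqslant2$'' presupposes that $\Delta$ is a power of a \emph{single} irreducible, which you have not justified. The repair is short---either observe first that the discriminant hypersurface is irreducible (as the image of an irreducible incidence correspondence over $\mathbb P(V)$), or argue directly: any homogeneous irreducible factor $Q$ of $\Delta$ is $G$-invariant of degree $d$ with $4\leqslant d\leqslant 8$ (the cofactor being invariant of degree $\geqslant4$), and the only homogeneous invariants in that range are scalar multiples of $I_4$, $I_6$, $I_4^2$; hence $Q\sim I_4$ or $Q\sim I_6$, forcing $I_4\mid\Delta$ or $I_6\mid\Delta$, which makes $\Delta$ a monomial in $I_4,I_6$ and contradicts $ab\neq0$. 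Note that your pencil alternative (twelve distinct singular members) yields only squarefreeness of $\Delta$, not irreducibility, so by itself it does not close this gap.
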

\begin{proof}
\eqref{Sa} Lemma \ref{Fl1} entails that
 \begin{equation}\label{Sc}
 \mbox{$J$ is the complement in $U\setminus\{0\}$ to the set \eqref{pos}.}
  \end{equation}
  By Corollary \ref{dim}, this proves
    the claims about $G$-stability, $\mathbb C^*$-stability, and closedness of $J$. It remains to
    prove that $\dim J=8$.

 We consider the categorical quotient for the action of $G$ on $U$:
\begin{equation*}\label{cq}
\pi_U\colon
U\rightarrow
U/\!\!/G.
\end{equation*}
By the general properties of such quotients  (see \cite[4.4]{PV}),
$\pi_U$ is surjec\-tive,
every fiber of $\pi_U$ is a closed $G$-stable subset of $U$ containing a unique closed $G$-orbit, and
the $\mathbb C^*$-action on $U$
descends to a $\mathbb C^*$-action on $U/\!\!/G$ such that $\pi_U$ is $\mathbb C^*$-equivariant.
From this, Lemma \ref{class}(a),(c),(d), and the equality $\dim G=8$, we infer that
every fiber $E$ of
$\pi_U$ shares the following properties:
\begin{enumerate}[\hskip 3.2mm \rm(a)
]
\item[$({\rm q}_1)$]
$E
$ is irreducible and $\dim E
=8$.
\item[$({\rm q}_2)$]
$E
$ contains an open $G$-orbit $\mathcal O_E$.
\item[$({\rm q}_3)$]
$E
\setminus \mathcal O_E\!\neq\!\varnothing$ if and only if $h_{\mu, 8}$ or $h_8\in \mathcal O_E$.\;If
$E\setminus \mathcal O_E\neq \varnothing$,
then  $E
\setminus \mathcal O_E$ is the closed irreducible $7$-dimensional set
$\overline{G(h_{2\mu, 7})}$ or, respectively,
$\overline{G(h_7)}$ whose unique closed $G$-orbit is  $G(h_{2\mu, 6})$ or, respectively, $\{0\}$.
\end{enumerate}

Let
$Y$ be the closure in $U/\!\!/G$ of the $\mathbb C^*$-orbit
of $\pi_U(h_{1,6})$. It is an irreducible algebraic curve. The fixed point $\pi_U(0)$ of the $\mathbb C^*$-action on $U/\!\!/G$
lies in $Y$ because $0\in \overline {\mathbb C^*\cdot h_{1,6}}$. Since every orbit closure of any reductive group action on an affine variety contains a unique closed orbit, this means that $\pi_U(0)$ is the complement  in $Y$ to the $\mathbb C^*$-orbit
of $\pi_U(h_{1,6})$.

We consider in $U$ the closed $G$-stable and $\mathbb C^*$-stable set $\pi_U^{-1}(Y)$.
 Lemma \ref{class}(c), the definition of $Y$, and property
 $({\rm q}_1)$ entail that the set
$\pi_U^{-1}(Y)\setminus\{0\}$ is the union
of $G$-orbits of all forms $f$ from Table 2. From this and \eqref{Sc} we infer that $J$ is
a closed subset of $\pi_U^{-1}(Y)\setminus\{0\}$, the morphism $\pi_U\colon J\to Y$ is surjective, and every its fiber is $7$-dimensional. Therefore, $\dim J=\dim Y+7=8$.

\vskip 1mm

\eqref{Sb}
The variety $U/\!\!/G$ is isomorphic to the affine plane $\mathbb A^2$ (see \cite[0.14]{PV}). Therefore, the ideal of $Y$ in $\mathbb C[U/\!\!/G]$ is principal. Let $\delta$ be its generator. Since the set $Y$ is irreducible and $\mathbb C^*$-stable,   $\delta$ is an irredu\-cible and $\mathbb C^*$-semi-invariant element.
Hence
$\Delta:=\pi_U^*(\delta)$
is an irreducible and $\mathbb C^*$-semi-invariant (i.e., homogeneous) element of  $\mathbb C[U]^G$. Since the group $G$ is
connected and has no nontrivial characters, $\Delta$ is also an irreducible element of $\mathbb C[U]$ (see \cite[Lem.\,2]{Pop1}).
Finally, $\pi_U^{-1}(Y)=\{f\in U\mid \Delta(f)=0\}$ and, as explained above, $\pi_U^{-1}(Y)\setminus \{0\}$ is the union of $G$-orbits of all forms $h$ from Table 1. Now the equivalence (i)$\Leftrightarrow$(ii) follows from
Lemma \ref{class}(a).
\end{proof}

\begin{corollary}\label{pS}
$p_U(J)$ is closed in ${\mathbb P}(U)$ and $\dim p_U(J)=7$.
\end{corollary}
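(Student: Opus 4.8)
The plan is to deduce both assertions from Lemma \ref{S}\eqref{Sa}, using the fact that $J$ is a cone (it is $\mathbb C^*$-stable and does not contain $0$, but its closure in $U$ does). First I would note that, by Lemma \ref{S}\eqref{Sa}, $J$ is closed in $U\setminus\{0\}$ and $\mathbb C^*$-stable, so its closure $\overline J$ in $U$ is the affine cone $\overline J=J\cup\{0\}$, which is $8$-dimensional. A $\mathbb C^*$-stable closed cone minus the origin has image under $p_U$ equal to $p_U(\overline J\setminus\{0\})$, which is a closed subset of $\mathbb P(U)$: indeed, for a closed cone $\mathcal C\subseteq U$ the image $p_U(\mathcal C\setminus\{0\})$ is closed in $\mathbb P(U)$ because it is the set of points of $\mathbb P(U)$ whose affine cone lies in $\mathcal C$, equivalently the common zero locus of the homogeneous polynomials vanishing on $\mathcal C$. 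Applying this with $\mathcal C=\overline J$ and observing $\overline J\setminus\{0\}=J$ gives that $p_U(J)$ is closed in $\mathbb P(U)$.

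For the dimension, the restriction $p_U\colon J\to p_U(J)$ is a surjective morphism all of whose fibers are $\mathbb C^*$-orbits, hence $1$-dimensional (they are orbits of a $1$-dimensional group acting freely on $U\setminus\{0\}$). By the fiber-dimension theorem applied to this dominant morphism between the irreducible-component-wise finite-dimensional varieties — or, more carefully, component by component of $J$ — we get $\dim p_U(J)=\dim J-1$. Since $\dim J=8$ by Lemma \ref{S}\eqref{Sa}, this yields $\dim p_U(J)=7$.

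I expect the only point requiring care is the assertion that $p_U$ of a closed cone (minus the origin) is closed in $\mathbb P(U)$; this is standard (it is the statement that the projection $\mathbb P(U)\times\{pt\}\to\mathbb P(U)$ behaves well, or simply that a projective variety cut out by homogeneous equations is closed), but one should phrase it so as not to invoke anything beyond the elementary correspondence between closed cones in $U$ and closed subsets of $\mathbb P(U)$. Alternatively, one can avoid this entirely by invoking that $p_U$ is a geometric quotient for the free $\mathbb C^*$-action on $U\setminus\{0\}$, under which the closed $\mathbb C^*$-stable subset $J$ maps to a closed subset of $\mathbb P(U)=(U\setminus\{0\})/\mathbb C^*$, and a geometric quotient morphism has all fibers of dimension $\dim\mathbb C^*=1$, giving the dimension count at once.
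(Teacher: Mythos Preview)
Your proposal is correct and follows essentially the same approach as the paper: both deduce the corollary from Lemma~\ref{S}\eqref{Sa} via the general principle that a $\mathbb C^*$-stable subset of $U\setminus\{0\}$ closed in $U\setminus\{0\}$ has closed image in $\mathbb P(U)$ of dimension one less. The paper simply states this principle without justification, whereas you supply the details (via the cone/homogeneous-ideal correspondence and, alternatively, the geometric-quotient viewpoint).
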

\begin{proof} If a subset $K$ of $U\setminus\{0\}$ is $\mathbb C^*$-stable and closed in
$U\setminus\{0\}$, then $p_U(K)$ is closed in ${\mathbb P}(U)$ and $\dim K=\dim p_U(K)+1$.
Therefore, the claim follows from Lemma \ref{S}.
\end{proof}

\subsection{\it Proof of Theorem {\rm\ref{main1}}}\label{pr1} For every form $f\in U\setminus \{0\}$, the restriction of $\pi_2$
to the
fiber $\pi_9^{-1}(p_U(f))$
is a bijection
 to
${\rm Fl}\big(C(f)\big)$ (see \eqref{pU}, \eqref{pp}).
Hence, by \eqref{<2} and Lemma\,\ref{S}, we have
\begin{equation}\label{ddfi}
\dim \pi_9^{-1}\big(p_U(f)\big)=
\begin{cases}
1& \mbox{if $f\in J$,}\\
0&\mbox{if $f\notin J$.}
\end{cases}
\end{equation}

Since $X$ is cut off in the $11$-dimensional variety ${\mathbb P}(U)\times {\mathbb P}(V)$ by two equations
(see \eqref{X}),
every irreducible component of $X$ is either $9$- or $10$-dimensional (see \cite[Cor.\,1.7]{Sha}).

First,
$X$ has no irreducible components of dimension 10. Indeed, arguing by contradiction, we assume that $X'$ is such a component. 
Since $\dim {\mathbb P}(U)=9$ and every fiber of $\pi_9$ is at most $1$-dimensional, this implies that all fibers of $\pi_9\colon X'\to {\mathbb P}(U)$ are $1$-dimensional. On the one hand, the latter entails that $\pi_9(X')={\mathbb P}(U)$, so $\dim \pi_9(X')=9$.
But on the other hand, in view of \eqref{ddfi}, it entails that $p_9(X')\subseteq p_U(J)$,
whence $\dim \pi_9(X')\leqslant 7$ by Corollary \ref{pS}. A contradiction.

Thus, every irreducible component $X'$ of $X$ is $9$-dimensional. Since every fiber of $\pi_9$ is at most $1$-dimensional, we have $9\geqslant \dim \pi_9(X')\geqslant 8$, which entails by Corollary \ref{pS} that $\pi_9(X') \nsubseteq p_U(J)$. Therefore,
\begin{equation}
\label{sur}
\pi_9(X')={\mathbb P}(U)
\end{equation} in view of  \eqref{ddfi}.

Again arguing by contradiction, we assume that $X$ has more than one irreducible component. In view of \eqref{sur}, there is a Zariski open subset ${\mathbb P}(U)^0$ of ${\mathbb P}(U)$ such that $X^0:=\pi_9^{-1}({\mathbb P}(U)^0
)$ is smooth, intersects every irreducible component of $X$, and
\begin{equation}\label{Theta}
\pi_9\colon
X^0\to {\mathbb P}(U)^0
\end{equation}
is an unbranched covering whose restriction to every irreducible compo\-nent of $X^0$ is surjective. Then $X^0 $ has more than one  irreducible compo\-nent and these components are pairwise disjoint.
We take a point $a\in {\mathbb P}(U)^0$. The finite set $\pi_9^{-1}(a)$ intersect every irreducible component of $X^0$.
By \cite[II, 2, Prop.]{Har}, the monodromy group of
\eqref {Theta} acts transitively on the finite set $\pi_9^{-1}(a)$. Therefore, there are two points of $\pi_9^{-1}(a)$ that lie in different irreducible components of $X^0$  and are
connected by a continuous (in strong $\mathbb C$-topology) path in $X^0$. This contradicts the property that these components are disjoint.\hfill $\Box$

\section{Relative section for the action of $PG$ on $X$}\label{rela}

We
start
with considering
in Subsections \ref{hfs}, \ref{rs}
two general const\-ructions  from the theory of algebraic trans\-for\-mation groups that we will need.

\subsection{\it Homogeneous fiber spaces {\rm (\cite[3.2]{Ser}, \cite[4.8]{PV})}}\label{hfs}
Let $R$ be a con\-nected algebraic group, let $Q$ be a closed subgroup of $R$, and let
$S$ be an algebraic variety endowed with an algebraic
action of $Q$. Then $Q$ acts on $R\times S$ by the formula
\begin{equation}\label{actD}
q\cdot (r, s)\mapsto (rq^{-1}, q\cdot s).
\end{equation}

A mild restriction on $S$ ensures the existence of a quotient for this action (in the sense of \cite[6.3]{Bor}): such a quotient exists if
\begin{equation}\label{*}
\begin{split}
\mbox{every finite subset
of $S$
lies
in an affine open subset
of $S$.}
\end{split}
\end{equation}
For instance, every quasiprojective $S$ shares property\eqref{*}. Hence if $S$ shares it, then every locally closed subset of $S$ shares it as well.

The specified quotient is denoted by
 \begin{equation}\label{quo}
\delta_{R,  Q, S}\colon R\times S\to R\times^Q S.
\end{equation}
The natural projection $R\times S\to R$ is $Q$-equivariant with respect to the actions of $Q$ on $R$ and $R\times S$ respectively by right multiplication and
through the first factor.
Therefore, by the universal mapping property of quotients \cite[6.3]{Bor}, it
induces the surjective morphism  of the quotient varieties
\begin{equation*}
\pi_{R, Q, S}^{\ }\colon R\times^Q S\to R/Q.
\end{equation*}
Every fiber of $\pi_{R, Q, S}$ is isomorphic to $S$. As the $R$-action on $R\times S$ by left multiplication of the first factor
commutes with
$Q$-action \eqref{actD}, the former  action descends to an $R$-action on $R\times^Q S$; the morphism $\pi_{R, Q, S}^{\ }$ is equivariant with respect to this action and the natural $R$-action on $R/Q$. Given the aforesaid,
$R\times^QS$ is called the {\it $($algebraic$)$ homogeneous fiber space over $R/Q$ with fiber $S$}.

In general,
the fibration
$\pi_{R, Q, S}^{\ }$
over $R/Q$ with fiber $S$
is locally trivial in the \'etale topology, but not in
the Zariski topology, i.e., every point of  $R/Q$ has a neighborhood, over a suitable
unramified covering of which, the induced fibration is trivial with fiber $S$. However, if $S$ is a vector space
over $\mathbb C$ and the $Q$-action on $S$
is linear, then  this fibration is locally trivial in the Zariski topology \cite[5.4 and Thm.\;2]{Ser}. In this case,
$R\times^Q S
$  is called the {\it homogeneous vector bundle over $R/Q$ with fiber $S$
}, and the integer $\dim S
$ is called the {\it rank of this bundle}.

\subsection{\it Relative sections
}\label{rs}
Let $M$ be an irre\-ducible al\-geb\-raic variety en\-dowed with an algebraic action of an algeb\-raic group $R$. Let $S$ be a  closed subset of $M$ and let $S_1,\ldots, S_d$ be all its irreducible components.
Since $S$ is closed,
its $R$-normalizer
$
N_{R, S}
$
(see \eqref{NZ}) is a closed subgroup of $R$ whose elements permute the sets
$S_1,\ldots, S_d$.
If a quotient \eqref{quo} for $Q=N_{R, S}$ exists (which happens, for instance, if
$S$ shares property \eqref{*}), then the universal mapping property of quotient  entails
that the morphism
\begin{equation*}
\alpha\colon R\times S\to M, \quad (r, s)\mapsto r\cdot s
\end{equation*}
is included in the following commutative diagram of $R$-equivariant mor\-phisms:
\begin{equation}\label{isors}
\begin{matrix}
\xymatrix@C=5mm@R=5mm{&R\times S\ar[ld]_{\delta_{R, N_{R, S}, S}}\ar[rd]^{\alpha}&\\
R\times^{N_{R, S}} S\ar[rr]^{\hskip 5mm \iota}&& M
}
\end{matrix}.
\end{equation}

\vskip 1mm

Below in this subsection, we use
the introduced notations.

Definition \ref{relsec} and Lemma \ref{rese} below fix the inaccuracy in definition on p.\,160  and Prop. 2.9 of \cite[2.8]{PV} and generalize definition on p.\,24 and Prop. 1.2 of  \cite{Kat}.

\begin{definition}\label{relsec}
A closed subset $S$ of $M$
is called a {\it relative section for the action of $R$ on $M$} if  the following hold:
\begin{enumerate}[\hskip 3.0mm \rm(i)]
\item[$({\rm s}_1)$] $\overline{R\cdot S_i}=M$ for every $i=1,\ldots, d$;
\item[$({\rm s}_2)$] there is a dense open subset $S^0$ of $S$  such that
every $r\in R$ for which
$r\cdot S^0\cap S^0\neq \varnothing$
lies in
$N_{R, S}$.
\end{enumerate}
\end{definition}

Below in this subsection we assume that $S$ is a relative section for the action of $R$ on $M$.

We recall from \cite[p.\,127]{PV} that saying a certain property holds for points $y$ in general position in an algebraic variety $Y$ means it  holds for every point $y$ of a dense open subset of  $Y$ (depending on the property under consideration).

\begin{lemma}\label{rese}
\

\begin{enumerate}[\hskip 4.2mm \rm(a)]
\item The natural action of $N_{R, S}$ on the set of irreducible components of $S$ is transitive.
\item Every $S_i$
is
a relative section for the same action of $R$ on $M$.
\item $N_{R, S_{i}}=N_{N_{R, S}, S_{i}}$.
\end{enumerate}

If a quotient \eqref{quo} for $Q=N_{R, S}$ exists, then
\begin{enumerate}[\hskip 4.2mm \rm(a)]
\item[\rm(d)] $R\times^{N_{R, S}} S$ is irreducible,
\item[\rm(e)] $\iota$ in {\rm \eqref{isors}}
is a birational isomorphism.
\end{enumerate}
\end{lemma}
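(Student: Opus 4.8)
The plan is to prove the five assertions in the order (a), (b), (c), (d), (e), after one preliminary normalization. Write $N:=N_{R,S}$. First I would shrink the dense open set $S^0$ of $({\rm s}_2)$: replace it by $S^0\setminus\bigcup_{i\neq j}(S_i\cap S_j)$. Since each $S_i\cap S_j$ ($i\neq j$) meets every component of $S$ in a proper closed subset, the new $S^0$ is still dense and open in $S$; it still satisfies $({\rm s}_2)$ because shrinking $S^0$ only weakens the requirement there; and now every point of $S^0$ lies in exactly one component. Put $S_i^0:=S^0\cap S_i$; then $S_i^0$ is dense open in $S_i$, the $S_i^0$ are pairwise disjoint, and $S^0=\bigcup_i S_i^0$. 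For (a): each $\alpha(R\times S_i^0)$ is a constructible subset of $M$ which is dense, because $\overline{R\cdot S_i}=M$ by $({\rm s}_1)$ and $S_i^0$ is dense in $S_i$; as $M$ is irreducible, their common intersection is dense, so a point $m$ in general position can be written $m=r_i\cdot s_i$ with $s_i\in S_i^0$, for every $i$ simultaneously. For given $i,j$ set $q:=r_j^{-1}r_i$; then $q\cdot s_i=s_j$ with $s_i,s_j\in S^0$, so $q\cdot S^0\cap S^0\neq\varnothing$ and hence $q\in N$ by $({\rm s}_2)$; since $q$ permutes $S_1,\dots,S_d$ and carries the point $s_i$, lying in the single component $S_i$, to $s_j$, lying in the single component $S_j$, it carries $S_i$ onto $S_j$. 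Thus $N$ acts transitively on $\{S_1,\dots,S_d\}$.

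For (b) and (c) the same device works. If $r\cdot S_i^0\cap S_i^0\neq\varnothing$ then already $r\cdot S^0\cap S^0\neq\varnothing$, so $r\in N$, and then tracking a point of $S_i^0$ (which lies in a single component) shows $r\cdot S_i=S_i$; this proves that $S_i$ is a relative section with associated open subset $S_i^0$ (its instance of $({\rm s}_1)$ being the instance of $({\rm s}_1)$ for $S$), and it proves the inclusion $N_{R,S_i}\subseteq N$, once one notes that for $r\in N_{R,S_i}$ the two sets $r\cdot S_i^0$ and $S_i^0$ are dense open in the irreducible variety $S_i$ and therefore intersect. The inclusion $N_{R,S_i}\subseteq N$ together with the tautology $N_{N,S_i}=N\cap N_{R,S_i}$ gives (c). For (d): as $R$ is connected, the irreducible components of $R\times S$ are $R\times S_1,\dots,R\times S_d$, and the $N$-action \eqref{actD} carries $R\times S_i$ onto $R\times(q\cdot S_i)$, hence, by (a), permutes these components transitively; so the $N$-orbit of any point of $R\times S_i$ meets every $R\times S_j$, whence the quotient map $\delta_{R,N,S}$ satisfies $\delta_{R,N,S}(R\times S_i)=\delta_{R,N,S}(R\times S_j)$ for all $i,j$, and therefore $R\times^{N}S=\delta_{R,N,S}(R\times S_1)$ is irreducible.

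For (e) I would argue as follows. Since $N$ acts freely on $R\times S$ via \eqref{actD}, $R\times^{N}S$ is a geometric quotient, and the morphism $[r,s]\mapsto(rN,\,r\cdot s)$ identifies it with the closed subvariety $Z:=\{(gN,m)\in(R/N)\times M\mid g^{-1}\cdot m\in S\}$ (the condition is well defined since $N$ preserves $S$); under this identification $\iota$ becomes the projection $Z\to M$ and $\pi_{R,N,S}$ the projection $Z\to R/N$. The key step is to produce an $R$-equivariant rational map $\sigma\colon M\dashrightarrow R/N$ by setting $\sigma(g\cdot s):=gN$ for $s\in S^0$: this assignment is single-valued precisely by $({\rm s}_2)$ --- if $g_1s_1=g_2s_2$ with $s_1,s_2\in S^0$, then $g_2^{-1}g_1\cdot s_1=s_2$, so $g_2^{-1}g_1\in N$ --- it is defined on the dense set $R\cdot S^0$, it is $R$-equivariant, and $\sigma\equiv eN$ on $S^0$. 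Then $\tau\colon m\mapsto(\sigma(m),m)$ is a rational map $M\dashrightarrow Z=R\times^{N}S$ with $\iota\circ\tau=\mathrm{id}_M$, so $\iota$ is dominant and $\tau$ is injective on a dense open set. It remains to check that $\tau$ is dominant: $\overline{\mathrm{im}\,\tau}$ is an $R$-stable closed subset of the (by (d) irreducible) variety $R\times^{N}S$; since $\sigma\equiv eN$ on $S^0$, it contains a dense subset of the fibre $\pi_{R,N,S}^{-1}(eN)\cong S$, hence the whole fibre, hence $R\cdot\pi_{R,N,S}^{-1}(eN)=R\times^{N}S$. Now from $\iota\circ\tau=\mathrm{id}_M$ with $\iota$ and $\tau$ both dominant it follows that $\tau^{*}$ is an isomorphism of function fields, so $\iota$ is a birational isomorphism.

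The step I expect to need the most care is the construction of $\sigma$ in (e): one must verify that the single-valued assignment $g\cdot s\mapsto gN$ on $R\cdot S^0$, obtained by descent along the dominant morphism $R\times S^0\to M$, $(g,s)\mapsto g\cdot s$, genuinely extends to a rational map defined on a dense open subset of $M$, and that its "value-$eN$ locus" is exactly $S^0$ --- this is what drives the equivariance argument for the dominance of $\tau$. This is also the point at which the present formulation departs from \cite[2.8]{PV}: one must keep $S^0$ (and not $S$) throughout, and obtain the dominance of $\tau$ from the $R$-action, rather than from an (in general false) assertion that $R\cdot(S\setminus S^0)$ is a proper subvariety of $M$.
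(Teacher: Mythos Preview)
Your treatment of (a)--(d) is essentially the paper's, with only a cosmetic difference: the paper first enlarges $S^0$ to make it $N_{R,S}$-stable (by replacing it with $\bigcup_{n\in N_{R,S}} n\cdot S^0$) before removing $\bigcup_{i\neq j}S_i\cap S_j$, whereas you skip the first step. Your component-tracking argument compensates for this and is correct.

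For (e) your route diverges from the paper's and is considerably more involved. The paper's argument is short: since $\alpha$ is dominant by $({\rm s}_1)$, so is $\iota$; and (having made $S^0$ $N_{R,S}$-stable so that $R\times^{N_{R,S}}S^0$ is a well-defined dense open subset of $R\times^{N_{R,S}}S$) one checks directly that $\iota$ is injective there: if $\iota([r_1,s_1])=\iota([r_2,s_2])$ with $s_1,s_2\in S^0$, then $r_1\cdot s_1=r_2\cdot s_2$, so $(r_2^{-1}r_1)\cdot s_1=s_2\in S^0$, whence $r_2^{-1}r_1\in N_{R,S}$ by $({\rm s}_2)$ and $[r_1,s_1]=[r_2,s_2]$. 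In characteristic~$0$ a dominant morphism of irreducible varieties that is injective on a dense open set is birational, and the proof is complete.

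Your approach via an explicit rational inverse is in principle valid, but the step you flag is a genuine gap, not a formality. Knowing that $g\cdot s\mapsto gN$ is single-valued on the \emph{set} $R\cdot S^0$ does not yet make it a rational map: you must show that the morphism $R\times S_1^0\to R/N$, $(g,s)\mapsto gN$, factors rationally through the dominant morphism $\alpha_1\colon R\times S_1^0\to M$. This can be done in characteristic~$0$ (generic smoothness, Stein factorization, then Galois descent along the finite part), but it is real work that you do not supply---and once one has the paper's argument, your $\sigma$ is simply $\pi_{R,N_{R,S},S}\circ\iota^{-1}$ after the fact. A minor additional point: your identification of $R\times^{N_{R,S}}S$ with $Z\subseteq (R/N_{R,S})\times M$ is in general only a bijective, hence birational, morphism (nothing forces $Z$ to be normal); this suffices for your purposes but is not an isomorphism as stated.
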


\begin{proof}  Replacing $S^0$ in Definition \ref{relsec} by $\bigcup_{r\in {N_{R, S}}} r\cdot S^0$, we may (and shall) assume that $S^0$ is $N_{R, S}$-stable. Moreover, since $I:=\bigcup_{i\neq j}S_i\cap S_j$
is a closed $N_{R, S}$-stable subset of $S$ such that $S_i \nsubseteq I$ for every $i$, replacing $S^0$ by  $S^0\setminus I$, we also may (and shall) assume that the dense open subsets $S_i^0:=S^0\cap S_i$  of $S_i$ share the property
\begin{equation}\label{SiSj}
\begin{split}
S^0_l\cap S^0_k=\varnothing\quad \mbox{for all $l\neq k$}.
\end{split}
\end{equation}
We note that $S_1^0,\ldots, S_d^0$ are all irreducible components of
$S^0$,
so the elements of $N_{R, S}$ permute them. We also note that
\begin{equation}\label{intersect}
N_{Q, S_{i_1}\cup\ldots\cup S_{i_k}}=N_{Q, S_{i_1}^0\cup\ldots\cup S_{i_k}^0}
\end{equation}
for any $i_1,\ldots, i_k$ and any subgroup $Q$ of $R$.

\vskip 1mm

(a) By property $({\rm s}_1)$, for every $S_i, S_j$, there are $r_i, r_j\!\in\! R$ such that
$r_i\!\cdot \!S_i^0\cap r_j\!\cdot\! S_j^0\neq \varnothing$, i.e., for $r:=r_j^{-1}r_i$, we have
\begin{equation}\label{rirj}
r\cdot S_i^0\cap S_j^0\neq \varnothing.
\end{equation}
From \eqref{rirj} and property $({\rm s}_2)$,  we obtain
$r\in N_{R, S}$, hence $r
\cdot S_i^0$ is one of the irredu\-cible components of $S^0$. In view of \eqref{SiSj}, \eqref{rirj}, this entails $r\cdot S_i^0= S_j^0$, which proves (a).

\vskip 1mm

(b), (c)  If $r\in R$ and $r\cdot S_i^0\cap S_i^0\neq\varnothing$, then  property $({\rm s}_2)$ for $S$ yields $r\in N_{R, S}$, and
the same argument as above yields $r\cdot S_i^0=S_i^0$.
Taking into account \eqref{intersect}, this proves (b) and (c).

\vskip 1mm

(d) It follows from (a) that for the $N_{R, S}$-action on $R\times S$ defined by \eqref{actD}, every orbit intersects the closed subset $R\times S_1$. As the fibers of $\delta_{R, N_{R, S}, S}$ are orbits of this action (see \cite[6.3]{Bor}), this means that the restriction of $\delta_{R, N_{R, S}, S}$ to $R\times S_1$ is a surjective morphism to $R\times^{N_{R, S}} S$. This entails irreducibility of  $R\times^{N_{R, S}} S$ because $R\times S_1$ is irreducible.

\vskip 1mm

(e) The morphism $\alpha$ in \eqref{isors} is dominant  in view of property $\rm (s{}_1)$.  Since $\delta_{R, N_{R, S}, S}$ is surjective, this entails that $\iota$ is dominant, too.
As $S^0$ is a dense open $N_{R, S}$-stable subset of $S$, the set $\delta_{R, N_{R, S}, S}(R\times S^0)=R\times^{N_{R, S}} S^0$ is open in $R\times^{N_{R, S}} S$. Therefore,
(f) will be proved if we show that the restriction of $\iota$ to $R\times^{N_{R, S}} S^0$ is injective (here we use that ${\rm char}\,\mathbb C=0$). To show this, take two points
$a_1, a_2\in R\times^{N_{R, S}} S^0$.
For every $i=1, 2$, there are  $r_i\in R$, $s_i\in S^0$ such that $a_i=\delta_{R, N_{R, S}, S}(r_i, s_i)$. Assume that $\iota(a_1)=\iota(a_2)$. This yields, in view of $$\iota(a_i)=\alpha(r_i, s_i)=r_i\cdot s_i,$$
that $r_1\cdot s_1=r_2\cdot s_2$, i.e., $n\cdot s_1=s_2$ where $n:=r_2^{-1}r_1$.  By $\rm (s{}_2)$ this entails $n\in N_{R, S}$ and $$(r_2, s_2)=(r_1n^{-1}, n\cdot s_1);$$ whence $a_1=a_2$.
\end{proof}

Let $\widetilde{M}$ be
an irreducible algebraic variety endowed with an algebraic $R$-action, let $\tau\colon \widetilde{M}\to M$ be an $R$-equivariant dominant mor\-phism, and let $T_1,\ldots, T_k$ be all irreducible components of $\tau^{-1}(S)$.

Definition \ref{defregu} and Lemma \ref{55} below fix the inaccuracy in \cite[Prop.\,2.10]{PV},  \cite[Thm. (1.7.5)]{Pop2}, and generalize \cite[Prop.\,1.2]{Kat}.

\begin{definition}\label{defregu}
An irreducible component $T_i$ is called {\it regular} if
$\overline{\tau(T_i)}$ coincides with
one of $S_1,\ldots, S_d$ and
\begin{equation}\label{ddddd}
\dim \big(T_i\cap \tau^{-1}(m)\big)=\dim \widetilde{M}-\dim M
\end{equation}
for points $m$ in general position in $\overline{\tau(T_i)}$.
\end{definition}

\begin{lemma}\label{55}\

\begin{enumerate}[\hskip 4.2mm\rm (a)]
\item Regular irreducible components of $\tau^{-1}(S)$ exist.
\item
The union $\widetilde{S}$ of all
regular irreducible components of
$\tau^{-1}(S)$
is a relative section
for the action of $R$ on\;$\widetilde{M}$.
\item $N_{R, \widetilde{S}}=N_{R, S}$.
\end{enumerate}
\end{lemma}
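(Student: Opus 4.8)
The plan is to prove the three assertions of Lemma \ref{55} in the order (a), (c), (b), deriving each from the properties of $S$ as a relative section together with standard dimension estimates for the dominant $R$-equivariant morphism $\tau$.

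For part (a), I would argue by a dimension count. Since $\tau$ is dominant between irreducible varieties, a generic fiber has dimension $\dim\widetilde M-\dim M$, and by upper semicontinuity of fiber dimension this is the minimal fiber dimension. Pick any irreducible component $S_i$ of $S$; by property $({\rm s}_1)$ for the relative section $S$ we have $\overline{R\cdot S_i}=M$, so $S_i$ meets a dense open subset of $M$ over which $\tau$ behaves generically. Choosing a point $m$ in general position in $S_i$ and an irreducible component $T$ of $\tau^{-1}(S)$ meeting $\tau^{-1}(m)$ in a component of $\tau^{-1}(m)$ of the expected dimension $\dim\widetilde M-\dim M$, one checks that $\overline{\tau(T)}$ must be all of $S_i$ (it is irreducible, contained in some $S_j$, and contains a general point of $S_i$, forcing $S_j=S_i$) and that $T$ satisfies \eqref{ddddd} generically; hence $T$ is regular. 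The only subtlety is ensuring that such a $T$ with the expected fiber dimension exists — this follows because over the locus where $\tau$ has minimal fiber dimension, $\tau^{-1}(S_i)$ has pure dimension $\dim S_i+(\dim\widetilde M-\dim M)$, so at least one of its components dominates $S_i$ with generic fiber of the right dimension.

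For part (c), the inclusion $N_{R,S}\subseteq N_{R,\widetilde S}$ is the key point and also, I expect, the main obstacle. If $g\in N_{R,S}$, then $g$ stabilizes $S$, hence permutes the sets $S_1,\dots,S_d$, and since $\tau$ is $R$-equivariant, $g$ stabilizes $\tau^{-1}(S)$ and permutes its irreducible components $T_1,\dots,T_k$. One must check that $g$ sends regular components to regular components: this is immediate from the definition, because regularity of $T_i$ is phrased entirely in terms of $\overline{\tau(T_i)}$ being one of the $S_j$ and a generic-fiber-dimension condition, both of which are preserved by the $R$-action (using $\tau(g\cdot y)=g\cdot\tau(y)$ and that $g$ permutes the $S_j$). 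Hence $g$ stabilizes $\widetilde S$, giving $N_{R,S}\subseteq N_{R,\widetilde S}$. For the reverse inclusion, if $g\in N_{R,\widetilde S}$ then $g\cdot\widetilde S=\widetilde S$; applying $\tau$ and taking closures, $g$ permutes the sets $\overline{\tau(T_i)}$ over regular $T_i$, which by part (a) and the definition of regularity are exactly (some of) the $S_j$'s — in fact all of them, since every $S_j$ is $\overline{\tau(T_i)}$ for some regular $T_i$ by the argument in (a). Therefore $g$ permutes $S_1,\dots,S_d$, so $g\cdot S=S$, i.e.\ $g\in N_{R,S}$. This also shows $\bigcup_i\overline{\tau(T_i)}=S$ over regular $i$, which will be reused in part (b).

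For part (b), I must verify properties $({\rm s}_1)$ and $({\rm s}_2)$ of Definition \ref{relsec} for $\widetilde S$ inside $\widetilde M$. Property $({\rm s}_1)$: for a regular component $T_i$, we have $\overline{\tau(T_i)}=S_j$ for some $j$, and $\overline{R\cdot S_j}=M$ by $({\rm s}_1)$ for $S$; since $\tau$ is equivariant and dominant, $\tau(\overline{R\cdot T_i})$ is dense in $\overline{R\cdot S_j}=M$, and combined with \eqref{ddddd} a dimension count gives $\dim\overline{R\cdot T_i}=\dim R\cdot S_j+(\dim\widetilde M-\dim M)=\dim\widetilde M$ (using $\overline{R\cdot S_j}=M$), so $\overline{R\cdot T_i}=\widetilde M$. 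Property $({\rm s}_2)$: let $S^0\subseteq S$ be the dense open subset from the relative-section property of $S$; I would take $\widetilde S^0$ to be the intersection of $\widetilde S$ with $\tau^{-1}(S^0)$, further shrunk to lie inside the regular components and away from the other components and from $\tau^{-1}(S\setminus S^0)$. If $g\in R$ satisfies $g\cdot\widetilde S^0\cap\widetilde S^0\neq\varnothing$, pick a point $\widetilde s$ in the intersection; then $\tau(\widetilde s)$ and $\tau(g^{-1}\cdot\widetilde s)=g^{-1}\cdot\tau(\widetilde s)$ both lie in $S^0$, so $g\cdot S^0\cap S^0\neq\varnothing$, whence $g\in N_{R,S}$ by $({\rm s}_2)$ for $S$, and by part (c) $g\in N_{R,S}=N_{R,\widetilde S}$. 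Hence $\widetilde S$ is a relative section. The one place requiring care throughout is keeping the various "general position'' dense-open subsets compatible — one shrinks $S^0$ and the open locus over which $\tau$ has minimal fiber dimension finitely many times, which is harmless.
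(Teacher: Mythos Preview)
Your proposal is correct and follows essentially the same route as the paper's proof: the dimension count for (a), the equivariance/permutation argument for (c), and for (b) the dimension estimate $\dim\overline{R\cdot T_i}\geqslant\dim\widetilde M$ via fibers over $S_j$ together with $\widetilde S^0:=\widetilde S\cap\tau^{-1}(S^0)$ for property $(\mathrm s_2)$ are exactly what the paper does.

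One small organizational difference worth noting: in (a) you argue directly that \emph{every} $S_i$ is dominated by some regular component, and then use this in (c) for the inclusion $N_{R,\widetilde S}\subseteq N_{R,S}$. The paper instead finds a regular component over a single $S_1$ in (a), and in (c) invokes the transitivity of $N_{R,S}$ on $\{S_1,\dots,S_d\}$ (Lemma~\ref{rese}(a)) together with the already-proved inclusion $N_{R,S}\subseteq N_{R,\widetilde S}$ to transport that regular component over every $S_j$. Your variant is slightly more self-contained; the paper's saves a line by reusing Lemma~\ref{rese}(a). Also, your parenthetical claim that $\tau^{-1}(S_i)$ has \emph{pure} dimension over the generic-fiber locus is stronger than what you actually use or need; the weaker statement ``at least one component dominates $S_i$ with generic fiber of the right dimension'' (which you also state) suffices and is what the paper proves.
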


\begin{proof}
$\rm (a)$ Since $\tau$ is dominant,
there is a dense open subset $M_0$ of $M$ such that
$\dim \tau^{-1}(m)=\dim \widetilde{M}-\dim M$ for every $m\in M_0$ (see \cite[Thm. 1.25]{Sha}).
We first note that
\begin{equation}\label{M0}
S_j\cap M_0\neq \varnothing\quad \mbox{for every $j$.}
\end{equation}
Indeed, in view of property $({\rm s}_1)$, there are $r\in R, s\in S_j$ such that $r\cdot s\in M_0$. Whence
$\dim \widetilde{M}-\dim M=\dim \tau^{-1}(r\cdot s)
{=} \dim r\cdot \tau^{-1}(s)
{=}\dim \tau^{-1}(s)$ (the second equality
follows from $R$-equivariance of $\tau$, and the third
 from the fact that $R$ acts by automorphisms $M$). Therefore, $s\in S_j\cap M_0$, proving \eqref{M0}.

 Further, in view of (a),
there is an integer $c$ such that
\begin{equation}\label{nnn}
c=\dim S_1=\ldots=\dim S_d.
\end{equation}
The definitions of $S_1,\ldots, S_d$ and $T_1,\ldots, T_k$ yield
\begin{equation}\label{STS}
\left.\begin{split}
S&=S_1\cup\ldots\cup S_d=\tau(\tau^{-1}(S))\\
&=
\tau(T_1\cup\ldots\cup T_k)=
\tau(T_1)\cup\ldots\cup \tau(T_k)\\
&\subseteq
\overline{\tau(T_1)}\cup\ldots\cup \overline{\tau(T_k)}
\subseteq S.
\end{split}
\right\}
\end{equation}
This and \eqref{nnn} show that, in fact, both inclusions in \eqref{STS} are equalities and $c$ is the maximum of the integers $\dim\overline{\tau(T_1)},\ldots, \dim\overline{\tau(T_k)}$. Hence there is $i$ such that
\begin{equation}\label{dn}
\dim\overline{\tau(T_i)}=c.
\end{equation} Since $\overline{\tau(T_i)}$ is irreducible and, by \eqref{STS}, is the union of its intersections with  $S_1,\ldots, S_d$, it coincides with one of these intersections, hence
there is $j$ such that
$\overline{\tau(T_i)}\subseteq S_j$. This and \eqref{nnn}, \eqref{dn} entail that, in fact,
$\overline{\tau(T_i)}=S_j$. Thus, some of the sets $\overline{\tau(T_1)}, \ldots, \overline{\tau(T_k)}$ are irreducible components of $S$. After a suitable re\-num\-bering, we may (and shall) assume that $S_1$ is among these irredu\-cible components and that
$\overline{\tau(T_l)}=S_1$ if and only if $l=1,\ldots, q$. This and \eqref{M0} show that for points $m$ in general position in $S_1$, we have
\begin{equation}\label{top}
\begin{split}
\dim \tau^{-1}(m)&=\dim \widetilde{M}-\dim M,\\
\tau^{-1}(m)&= \big(\tau^{-1}(m)\cap T_1\big)\cup\ldots\cup  \big(\tau^{-1}(m)\cap T_q\big).
\end{split}
\end{equation}
In turn, \eqref{top} entails that there is $i\leqslant q$ such that  $\dim \big(T_i\cap \tau^{-1}(m)\big)=\dim \widetilde{M}-\dim M$. Thus, $T_i$ is regular.
This completes the proof of \rm (a).

\vskip 1mm

(c) The inclusion $N_{R, S}\subseteq N_{R, \widetilde{S}}$ follows from  Definition \ref{defregu} because $\tau$ is $R$-equivariant and $R$ acts by automorphisms of $\widetilde M$.
The reverse inclusion $N_{R, \widetilde{S}}\subseteq N_{R, S}$ follows from the fact that, due to Lemma \ref{rese}(a),  each $S_j$
is equal to $\overline{\tau(T_i)}$ for a suitable $i$.
\vskip 1mm

(b)  Let $T_i$ be a regular irreducible component of $\tau^{-1}(S)$
and let $\overline{\tau(T_i)}=S_j$. As we have $\tau(R\cdot T_i)=R\cdot S_j$ and $\overline{R\cdot S_j}=M$, the morphism
$$\gamma:=\tau|_{\overline{\tau(R\cdot T_i)}}\colon \overline{\tau(R\cdot T_i)}\to M,$$
is dominant. The fibers of $\gamma$ over points
 in general position in $M$ are transformed by $R$ to the fibers of $\gamma$ over points $m$ in general position in $S_j$. As $\gamma^{-1}(m)\supseteq \tau^{-1}(m)\cap T_i$, we then obtain from \eqref{ddddd} that  $\dim \overline{\tau(R\cdot T_i)}\geqslant \dim M+\dim \widetilde{M}-\dim M=\dim\widetilde{M}$. Therefore, $\overline{\tau(R\cdot T_i)}=\widetilde{M}$, i.e., property $\rm(s_1)$ holds for the pair $\widetilde{M}, \widetilde S$.

 Finally, we consider in $\widetilde{S}$ the dense open subset $\widetilde{S}^0:=\widetilde{S}\cap \tau^{-1}(S^0)$. Assume that there are $r\in R$ and $s\in \widetilde{S}^0$ such that $r\cdot s\in \widetilde{S}^0$. Then $S^0\supseteq \tau(\widetilde{S}^0)\ni \tau(r\cdot s)=r\cdot\tau(s)\in r\cdot S^0$. Thus, $r\cdot S^0\cap S^0\neq \varnothing$, hence $r\in N_{R, S}$ by property $\rm(s_2)$ for the pair $M, S$. In view of (c), this yields $r\in N_{R, \widetilde{S}}$.
 Therefore, property $\rm(s_2)$ holds for the pair $\widetilde{M}, \widetilde S$. This completes the proof of (b).
\end{proof}

\subsection{\it The Hesse pencil}\label{Hp} We intend to construct a relative section for the action of $PG$ on $X$. Our construction is based on consideration of the classical Hesse pencil
of cubics in ${\mathbb P}(V)$.

We consider in $U$ the $2$-dimensional linear subspace
\begin{equation*}\label{sbsL}
L:=\{\alpha(x_0^3+x_1^3+x_2^3\}+\beta x_0x_1x_2\mid (\alpha, \beta)\in \mathbb C^2
\},
\end{equation*}
and in ${\mathbb P}(U)$ we consider the line
\begin{equation}\label{ell}
\ell:=p_U(L\setminus \{0\})={\mathbb P}(L).
C^2, (\alpha, \beta)\neq (0, 0)\}\!\big),
\end{equation}
For every nonzero form $f=\alpha(x_0^3+x_1^3+x_2^3\}+\beta x_0x_1x_2\in L$, we put (see \eqref{cubic})
\begin{equation*}
\ell_{\beta/\alpha}:=p_U(f)\in \ell,\quad C_{\beta/\alpha}:=C(f),
\end{equation*}
where $\beta/0:=\infty$.

The {\it Hesse pencil} $\mathcal H$ and the {\it Hessian group} $\rm Hes$ are defined (cf.\;\cite{AD})  as respectively the set of plane cubics
\begin{equation*}\label{Hepe}
\mathcal H:=\{C_{\lambda}\mid \lambda\in \mathbb C\cup \infty\}
\end{equation*}
and its $PG$-normalizer (see \eqref{NZ})
\begin{equation}\label{fHes}
  {\rm Hes}:=N_{PG, \mathcal H}.
\end{equation}

The following Lemma \ref{proC} summarizes some of their properties that we need. Recall that $\varepsilon $ is a primitive cubic root of $1$.

\begin{lemma}\label{proC}
\
 \begin{enumerate}[\hskip 3mm\rm(i)]
 \item[$({\rm H}{}_1)$]\label{H1} For every elliptic curve $C$ in ${\mathbb P}(V)$, there is $g\in PG$ such that $g\cdot C\in \mathcal H$.
   \item[$({\rm H}{}_2)$]\label{H2} A cubic $C_\lambda$ is an elliptic curve
      if only if $\lambda\neq \infty, -3, -3\varepsilon, -3\varepsilon^2$.
   \item[$({\rm H}{}_3)$]\label{H3} For every $\lambda=\infty, -3, -3\varepsilon, -3\varepsilon^2$, the cubic $C_\lambda$
    is a triangle, i.e., a union of three different lines
  \begin{equation*}
  l_{\lambda, 1},  l_{\lambda, 2},  l_{\lambda, 3},
  \end{equation*}
  in ${\mathbb P}(V)$ with empty intersection,
       making the set $\mathcal L$ of $12$ lines
    altogether.\footnote{In \cite[Bemerkung, pp. 43--44]{Kra} it is incorrectly stated that if $C_\lambda$ for $\lambda\neq \infty$ is not smooth, then $\lambda=-3$ and $C_{-3}$ is a nodal cubic.}
   \item[$({\rm H}{}_4)$]\label{H4} A cubic $C$ in ${\mathbb P}(V)$ contains $\mathcal F$
    if and only if $C\in \mathcal H$.
   \item[$({\rm H}{}_5)$]\label{H5} If
   $C_\lambda$ is an elliptic curve,
      then $\mathcal F$ is the set ${\rm Fl}(C_{\lambda})$ of all its flexes.
          \item[$({\rm H}{}_6)$]\label{H6}
        On every line from $\mathcal L$ lie exactly three points of $\mathcal F$, and through every point of $\mathcal F$ pass exactly four lines from $\mathcal L$.
         \item[$({\rm H}{}_7)$]\label{H7} If a line $l$ in ${\mathbb P}(V)$ contains two different points of $\mathcal F$, then $l\in \mathcal L$.
             \item[$({\rm H}{}_8)$]\label{H8}  The group $Z_{PG, \mathcal F}$ \textup(see {\rm \eqref{NZ})} is trivial.
                               \item[$({\rm H}{}_9)$]\label{H9}
              $N_{PG, \mathcal F}= N_{PG, \mathcal L}=N_{PG, \ell}={\rm Hes}$.
  \item[$({\rm H}{}_{10})$]\label{H10}
    The bijection
  \begin{equation}\label{phi}
  \varphi\colon \mathcal F\to \mathbb F_3^2,\;t_{\boldsymbol{i},\boldsymbol{j}}\mapsto ({
  \boldsymbol
  i}, {
  \boldsymbol
  j}),
  \end{equation}
 preserves collinearity of points, i.e.,
   three different points
  $a, b, c\in \mathcal F$ are collinear in ${\mathbb P}(V)$ if and only if the points
  $\varphi(a)$, $\varphi(b)$, $\varphi(c)$ are collinear in
    the
  two-dimensional
  affine space  $\mathbb F_3^2$ over $\mathbb F_3$.
    \item[$({\rm H}{}_{11})$]\label{H11}
        If $g\in N_{PG, \mathcal F}$, then  $\varphi\circ g\circ \varphi^{-1}\in {\rm SAff}(\mathbb F_3^2)$
            and the map
      \begin{equation*}
      N_{PG, \mathcal F}\to {\rm SAff}(\mathbb F_3^2),\;g\mapsto \varphi\circ g\circ \varphi^{-1}
        \end{equation*}
        is a group isomorphism.
        \item[$({\rm H}{}_{12})$]\label{H12}  If $g\in PG$ and $C\in\mathcal H$ is an elliptic curve, then $g\cdot C\in\mathcal H$ if and only if $g\in {\rm Hes}$.
 \end{enumerate}
\end{lemma}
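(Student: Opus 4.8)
The plan is to work throughout with the explicit Hesse pencil $\mathcal H=\{C_\lambda\}$ and its nine base points. The first step is to note that $\mathcal F$ is exactly the base locus $\bigcap_\lambda C_\lambda$: setting $x_0x_1x_2=0$ in $x_0^3+x_1^3+x_2^3+\lambda x_0x_1x_2$ immediately yields the nine points \eqref{pij}, so in particular $\mathcal F\subseteq C$ for every $C\in\mathcal H$. The statements $(\mathrm{H}_1)$, $(\mathrm{H}_2)$, $(\mathrm{H}_3)$ (and the configuration facts $(\mathrm{H}_6)$, $(\mathrm{H}_7)$) are the classical properties of the Hesse pencil and of the Hesse configuration of its nine inflection points; for $(\mathrm{H}_1)$ (existence of the Hesse normal form) I would simply cite \cite{AD}, while for $(\mathrm{H}_2)$, $(\mathrm{H}_3)$ I would re-derive the precise form needed: a point $(a:b:c)$ is singular on $C_\lambda$ ($\lambda$ finite) iff $3a^2+\lambda bc=3b^2+\lambda ca=3c^2+\lambda ab=0$, and multiplying these by $a,b,c$ forces $a^3=b^3=c^3\neq 0$, hence $\lambda^3=-27$; conversely each such $\lambda$, together with $\lambda=\infty$, gives a union of three non-concurrent lines (for instance $x_0^3+x_1^3+x_2^3-3x_0x_1x_2=\prod_{j=0}^{2}(x_0+\varepsilon^jx_1+\varepsilon^{-j}x_2)$, non-concurrent because the coefficient matrix is a Vandermonde matrix). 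This produces the twelve pairwise distinct lines of $\mathcal L$, pairwise distinct since two members of $\mathcal H$ cannot share a line, the member $C_0$ being irreducible.

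Next come the parts needing genuine verification. For $(\mathrm{H}_4)$ I would solve the linear system expressing that a general cubic $\sum\alpha_{i_0i_1i_2}x_0^{i_0}x_1^{i_1}x_2^{i_2}$ vanishes at all nine points of $\mathcal F$: at each point only four monomials survive and each resulting equation is a degree-$2$ polynomial in $\varepsilon^j$ having the three cube roots of $1$ as roots, which forces $\alpha_{300}=\alpha_{030}=\alpha_{003}$, every $\alpha_{i_0i_1i_2}$ with exactly one zero index equal to $0$, and $\alpha_{111}$ free; hence the space of cubics through $\mathcal F$ is exactly $L$, which is $(\mathrm{H}_4)$. For $(\mathrm{H}_5)$ a direct computation from \eqref{FH}, \eqref{H} shows that $H_f\in L$ whenever $f\in L$; hence for $C_\lambda$ an elliptic curve, ${\rm Fl}(C_\lambda)=C_\lambda\cap C(H_f)$ (see \eqref{Ff}) is the intersection of two \emph{distinct} members of $\mathcal H$ (distinct because ${\rm Fl}(C_\lambda)$ is finite while $C_\lambda$ is a curve), so by B\'ezout it has at most nine points; as it contains the nine points of $\mathcal F$, it equals $\mathcal F$, and ${\rm Fl}(C_\lambda)$ is the set of flexes by \cite{BK}. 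For $(\mathrm{H}_{10})$ I would verify, using the explicit twelve lines from $(\mathrm{H}_3)$ together with the elementary fact that three distinct points of $\mathbb F_3^2$ are collinear iff their sum is $0$, that $\varphi$ carries the twelve collinear triples of $\mathcal F$ onto the twelve affine lines of $\mathbb F_3^2$; this bookkeeping shortens considerably once one exhibits two elements of $Z_{PG,\mathcal H}$ — the cyclic coordinate permutation and a suitable diagonal matrix with cube-root-of-unity entries — which via $\varphi$ induce the two coordinate translations of $\mathbb F_3^2$ and preserve collinearity on both sides, so that only triples through one fixed point need be checked. Then $(\mathrm{H}_6)$, $(\mathrm{H}_7)$ follow from the trivial counterparts in ${\rm AG}(2,\mathbb F_3)$. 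Finally $(\mathrm{H}_8)$ is immediate: $\mathcal F$ contains four points in general position (a finite check), and a projectivity fixing a projective frame is the identity.

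For $(\mathrm{H}_9)$ I would chain four equalities: a transformation permuting $\mathcal F$ permutes the pairs of its points, hence by $(\mathrm{H}_6)$, $(\mathrm{H}_7)$ permutes their twelve joining lines, so $N_{PG,\mathcal F}\subseteq N_{PG,\mathcal L}$; conversely $\mathcal F$ is recovered from $\mathcal L$ as the set of points lying on at least three of the twelve lines (the remaining $\binom{12}{2}-9\binom{4}{2}=12$ pairwise intersection points being simple — they are the twelve triangle vertices, checked via $(\mathrm{H}_3)$), so $N_{PG,\mathcal L}\subseteq N_{PG,\mathcal F}$; by $(\mathrm{H}_4)$, $g\cdot\mathcal F=\mathcal F$ iff $g$ permutes the cubics containing $\mathcal F$, i.e. iff $g\in N_{PG,\mathcal H}={\rm Hes}$; and since the line $\ell$ parametrizes the (reduced) cubics of $\mathcal H$ bijectively, $g$ preserves $\ell$ iff $g$ permutes $\mathcal H$. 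For $(\mathrm{H}_{11})$, by $(\mathrm{H}_{10})$ the map $g\mapsto\varphi\circ g\circ\varphi^{-1}$ sends $N_{PG,\mathcal F}$ into the group of collineations of the affine plane $\mathbb F_3^2$, which equals ${\rm Aff}(\mathbb F_3^2)$ since $\mathbb F_3$ has no nontrivial automorphism; it is an injective homomorphism by $(\mathrm{H}_8)$; its image lies in ${\rm SAff}(\mathbb F_3^2)$ because each $g\in{\rm Hes}=N_{PG,\mathcal H}$ permutes the four singular members of $\mathcal H$ — equivalently the four parallel classes of lines of $\mathcal L$ — by an even permutation, which forces determinant $1$ on the linear part; and the image is all of ${\rm SAff}(\mathbb F_3^2)$ since $|{\rm Hes}|=216=|{\rm SAff}(\mathbb F_3^2)|$ and an injective homomorphism with image inside a group of its own order is an isomorphism onto it. Finally $(\mathrm{H}_{12})$: if $g\cdot C\in\mathcal H$ with $C\in\mathcal H$ elliptic, then $g\cdot C$ is an elliptic member of $\mathcal H$, so by $(\mathrm{H}_5)$ and \eqref{gFl} we get $\mathcal F={\rm Fl}(g\cdot C)=g\cdot{\rm Fl}(C)=g\cdot\mathcal F$, whence $g\in N_{PG,\mathcal F}={\rm Hes}$; the converse is \eqref{fHes}.

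The main obstacle I expect is the precise identification in $(\mathrm{H}_{11})$ of the image of $N_{PG,\mathcal F}$ as exactly ${\rm SAff}(\mathbb F_3^2)$ rather than all of ${\rm Aff}(\mathbb F_3^2)$: this requires both that the image lands in ${\rm SAff}$ (equivalently, that ${\rm Hes}$ induces only even permutations of the four inflectional triangles, equivalently that $|{\rm Hes}|=216$ and not $432$) and that it fills ${\rm SAff}$. The cleanest way to settle both is to exhibit an explicit generating set of ${\rm Hes}$ inside $PG$ — the two coordinate translations coming from $Z_{PG,\mathcal H}$, a coordinate transposition, and the ``Fourier'' involution $(x_0:x_1:x_2)\mapsto(x_0+x_1+x_2:x_0+\varepsilon x_1+\varepsilon^2x_2:x_0+\varepsilon^2x_1+\varepsilon x_2)$, the projectivity implementing the Hessian map on $\mathcal H$ — and to check directly that each induces an even permutation of the four triangles and that together they generate ${\rm SAff}(\mathbb F_3^2)$. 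Apart from this, the only nonroutine ingredient is $(\mathrm{H}_1)$, the existence of the Hesse normal form, which I would take from \cite{AD}; everything else reduces to bounded finite computation.
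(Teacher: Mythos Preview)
Your proposal is largely sound and in several places more self-contained than the paper: where the paper simply cites \cite{BK} for $(\mathrm{H}_1)$--$(\mathrm{H}_5)$, you re-derive $(\mathrm{H}_2)$--$(\mathrm{H}_5)$ by direct computation (your Hessian computation $H_f\in L$ for $f\in L$ is correct and makes $(\mathrm{H}_5)$ pleasantly clean via B\'ezout). For $(\mathrm{H}_{10})$ the paper instead uses the group law on a smooth $C_\lambda$ with origin $t_{\boldsymbol 0,\boldsymbol 0}$ and shows $\varphi$ is a group isomorphism, while you do a direct check reduced by translations; both are fine. Your arguments for $(\mathrm{H}_8)$, $(\mathrm{H}_9)$, $(\mathrm{H}_{12})$ match the paper's.

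There is, however, a genuine gap in your $(\mathrm{H}_{11})$. Your plan is to exhibit elements of ${\rm Hes}$ whose images generate ${\rm SAff}(\mathbb F_3^2)$ (good), and then to bound the image inside ${\rm SAff}$ by ``checking directly that each [generator] induces an even permutation of the four triangles.'' That step is circular: you have no independent proof that your exhibited elements generate \emph{all} of ${\rm Hes}$, so verifying evenness on them says nothing about an arbitrary $g\in{\rm Hes}$. Equivalently, you invoke $|{\rm Hes}|=216$ before having established it. The paper closes this gap by a different device: after producing five explicit elements whose images generate ${\rm SAff}(\mathbb F_3^2)$, it shows the image is \emph{not} all of ${\rm Aff}(\mathbb F_3^2)$ by considering the permutation of $\mathcal F$ induced by complex conjugation $(\alpha_0:\alpha_1:\alpha_2)\mapsto(\overline{\alpha_0}:\overline{\alpha_1}:\overline{\alpha_2})$, computing the corresponding element $\widehat c\in{\rm Aff}(\mathbb F_3^2)$, and proving via a projective-frame comparison that no $g\in PG$ realizes $\widehat c$. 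Since $[{\rm Aff}:{\rm SAff}]=2$, this forces the image to equal ${\rm SAff}$.

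Your gap is also fixable along the lines you sketched, but you must supply the missing reason why \emph{every} $g\in{\rm Hes}$ acts evenly on the four triangles. One clean way: the $G$-action on $U$ descends to a linear $PG$-action (the center $\mu_3\subset{\rm SL}_3$ acts trivially on degree-$3$ forms), so ${\rm Hes}=N_{PG,\ell}$ acts on $\ell=\mathbb P(L)$ through ${\rm PGL}(L)\cong{\rm PGL}_2(\mathbb C)$, preserving the four singular points $\ell_\infty,\ell_{-3},\ell_{-3\varepsilon},\ell_{-3\varepsilon^2}$. Their cross-ratio is a primitive sixth root of unity (an equianharmonic quadruple), whose set-stabilizer in ${\rm PGL}_2(\mathbb C)$ is $A_4$; hence the induced permutation is always even. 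Combined with your (correct) observation that evenness on the four parallel classes in $\mathbb F_3^2$ is equivalent to determinant $1$ on the linear part, this yields image $\subseteq{\rm SAff}$ without circularity.
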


\begin{proof}

$({\rm H}{}_{1})$ See \cite[pp.\,291--293, Thm.\,4]{BK}.

\vskip 1mm

$({\rm H}{}_{2})$, $({\rm H}{}_{3})$, $({\rm H}{}_{4})$, $({\rm H}{}_{5})$ See \cite[pp.\,294--295, Prop.\,5]{BK}.

\vskip 1mm

$({\rm H}{}_{6})$  If $\lambda=\infty, -3, -3\varepsilon, -3\varepsilon^2$, then $f$ in $C_{\lambda}=C(f)$ is a product of three linear forms whose zero sets in ${\mathbb P}(V)$ are the lines $l_{\lambda, 1},  l_{\lambda, 2}, l_{\lambda, 3}$.
In the explicit form this product looks as follows  (see \cite[p.\,294]{BK}):
\begin{equation}\label{liness}
\begin{split}
C_\infty&\!=C(x_0x_1x_2),\\
C_{-3}&\!=C\big((x_0\!+\!x_1\!+\!x_2)(x_0\!+\!\varepsilon x_1\!+\! \varepsilon^2 x_2)(x_0\!+\!\varepsilon^2 x_1\!+\! \varepsilon x_2)\big),\\
C_{-3\varepsilon}&\!=\!C\big((x_0\!+\!x_1\!+\!\varepsilon x_2)(x_0\!+\!\varepsilon x_1\!+\!  x_2)(x_0\!+\!\varepsilon^2 x_1\!+\! \varepsilon^2 x_2)\big),\\
C_{-3\varepsilon^2}&\!=C\big((x_0\!+\!x_1\!+\!\varepsilon^2 x_2)(x_0\!+\!\varepsilon x_1\!+\! \varepsilon x_2)(x_0\!+\!\varepsilon^2 x_1\!+\!  x_2)\big).
\end{split}
\end{equation}
From \eqref{pij} and \eqref{liness} it follows directly that the relative position of the points from $\mathcal F$  and the lines from $\mathcal L$ is as
depicted in the following Figure\,1:
\begin{figure}[htbp]\center
\includegraphics[width=90mm]{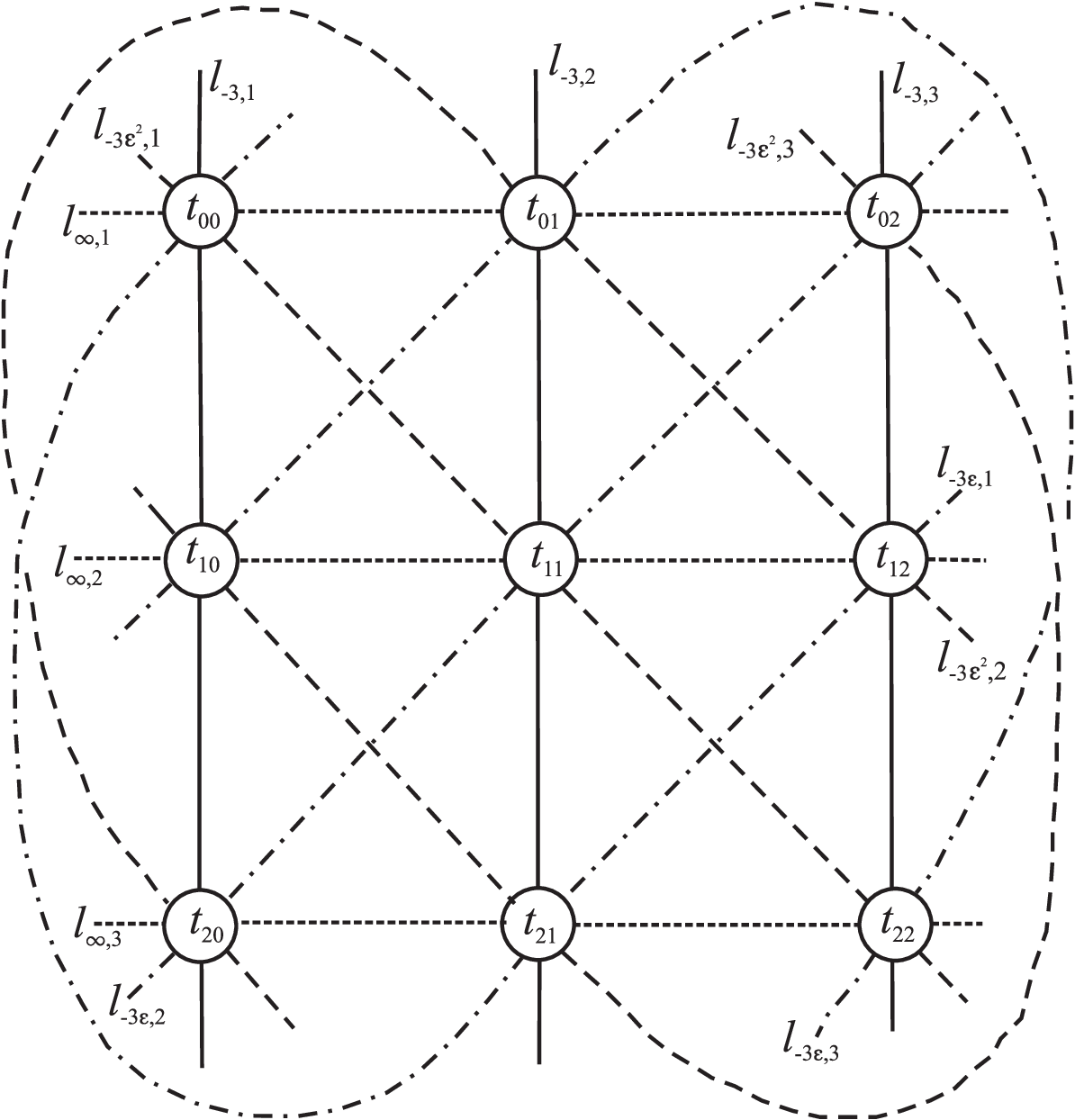}
\end{figure}
\vskip 6mm
\centerline{\text{\sc
Figure\,1}}

\vskip 4mm

\noindent This, in turn, clearly implies $({\rm H}{}_{6})$.

\vskip 1mm

$({\rm H}{}_{7})$ This follows from the fact that for every two different points of $\mathcal F$, there is a line from $\mathcal L$ containing them (see Figure 1).

\vskip 1mm

$({\rm H}{}_{8})$ In view of the first fundamental theorem of projective geometry
\cite[4.5.10]{Ber},
this follows from the fact that $\mathcal F$ contains a projective frame of ${\mathbb P}(V)$, for instance,  $t_{\boldsymbol{0},\boldsymbol{0}}$, $t_{\boldsymbol{1},\boldsymbol{0}}$, $t_{\boldsymbol{1},\boldsymbol{1}}$, $t_{\boldsymbol{2},\boldsymbol{1}}$, as is readily seen from Figure\,1.

\vskip 1mm

$({\rm H}{}_{9})$  If $g\in N_{PG, \ell}$ and $C\in \mathcal H$, i.e., $C=C(f)$ for $f\in \ell$, then
$g\cdot C=g\cdot C(f)=C(g^{-1}\cdot f)\in\mathcal H$. Therefore, $g\in N_{PG, \mathcal H}$. Conversely, if $g\in N_{PG, \mathcal H}$ and $f\in \ell$, then $C=C(f)\in \mathcal H$, therefore,  $g\cdot C=C(g^{-1}\cdot f)\in\mathcal H$, hence $g^{-1}\cdot f\in \ell$. Therefore, $g\in N_{PG, \ell}$. This proves that $N_{PG, \ell}=N_{PG, \mathcal H}=:{\rm Hes}$.

 If $g\in H_{PG, \mathcal H}$ and $C\in \mathcal H$ is an elliptic curve, then $g\cdot C\in \mathcal H$ and $\mathcal F={\rm Fl}(g\cdot C)=g\cdot {\rm Fl}(C)=g\cdot {\mathcal F}$
 in view of $({\rm H}{}_{5})$. Therefore, $g\in N_{PG, \mathcal F}$.
 Conversely, let $g\in N_{PG, \mathcal F}$ and $C\in\mathcal H$. By   $({\rm H}{}_{4})$, the latter and former inclusions imply respectively the inclusions $\mathcal F\subset C$ and
 $\mathcal F\subset g\cdot C$. Therefore, $g\in N_{PG, \mathcal H}$. This proves
 $N_{PG, \mathcal F}=N_{PG, \mathcal H}$.

Finally, if $g\in N_{PG, \mathcal F}=N_{PG, \mathcal H}$ and $C\in \mathcal H$ is a singular cubic  (triangle), then the cubic $g\cdot C\subset \mathcal H$ is singular as well. This and the definition of $\mathcal L$
(see   $({\rm H}{}_{3})$) entail that $g\cdot \mathcal L=\mathcal L$, i.e., $g\in N_{PG, \mathcal L}$. Conversely, if $g\in N_{PG, \mathcal L}$, i.e., $g\cdot \mathcal L$, then
$g\cdot \mathcal F=\mathcal F$ because $\mathcal F$  can be geometrically characterized as the set of all such points of ${\mathbb P}(V)$ through which four different lines from $\mathcal L$ pass. Therefore, $g\in N_{PG, \mathcal F}$. This proves that
 $N_{PG, \mathcal L}=N_{PG, \mathcal F}$.

\vskip 1mm

$({\rm H}{}_{10})$
Choosing $(\boldsymbol{0}, \boldsymbol{0})$ as the origin, we identify the affine space $\mathbb F_3^2$ with the vector space of its parallel translations. The unique line in $\mathbb F_3^2$ passing through two different points $u, v$
is the set $\{u+t(v-u)\mid t\in \mathbb F_3\}$. It consists of three points $u$, $v$, and $u+\boldsymbol{2}(v-u)=2(u+v)$. Hence there are $\tfrac{1}{3}\binom{9}{2}=12$ lines in $\mathbb F_3^2$ altogether.

Fix an elliptic curve $C_\lambda\in \mathcal H$. By $({\rm H}{}_{5})$,  the set of all its flexes is $\mathcal F$. Applying the classical construction \cite[III, 3.2]{Sha}, we endow $C_\lambda$ with the structure of a one-dimensional Abelian variety  $A$ taking a point $o\in \mathcal F$ as the zero of the group law $\oplus$. Then $\mathcal F$ is the sub\-group of $A$ of order $9$ and $\mathcal F\setminus \{o\}$ is the set of
all elements of $A$ of or\-der $3$. Hence this subgroup is isomorphic to the vector group of $\mathbb F_3^2$. Every group isomorphism $\alpha\colon \mathcal F\to \mathbb F_3^2$ preserves collinearity. Indeed, by \cite[p.\,175]{Sha}, collinearity in ${\mathbb P}(V)$ of three different points $a, b, c\in \mathcal F$ is equivalent to
\
\vskip -4mm
\begin{equation}\label{=}
a\oplus b\oplus c=o.
\end{equation}
Since $\alpha$ is a group isomorphism,  \eqref{=} is equivalent to
$\alpha(a)+\alpha(b)+\alpha(c)=(\boldsymbol{0}, \boldsymbol{0})$, i.e.,
$\alpha(c)=2(\alpha(a)+\alpha(b))$. As explained above, the latter means collinearity of
$\alpha(a)$, $\alpha(b)$, $\alpha(c)$ in $\mathbb F_3^2$.

The aforesaid implies that $a\oplus b$ for any $a, b\in\mathcal F$ is obtained as follows. If $a\neq b$, let $c\in \mathcal F$ be the unique point such that $a$, $b$, $c$ are collinear. If $c \neq o$, then $a\oplus b$ is the unique point $d\in \mathcal F$ such that $c, d, o$ are collinear; if $c=o$, then $a\oplus b=o$.
If $a=b\neq o$, then $a\oplus b$ is the unique point $e\in \mathcal F$ such that $a, e, o$ are collinear.

Using this description and Figure 1, it is directly verified that \eqref{phi}  is a group isomorphism for $o=(\boldsymbol{0}, \boldsymbol{0})$, hence it preserves collinearity.

\vskip 1mm

$({\rm H}{}_{11})$  Since $g$ preserves collinearity of points of ${\mathbb P}(V)$, from $({\rm H}{}_{10})$ we infer that  $\varphi\circ g\circ \varphi^{-1}$  is the permutation of $\mathbb F_3^2$ preserving collinearity of points of $\mathbb F_3^2$. Hence, by the fundamental theorem of affine geometry \cite[2.6.3]{Ber}, this permutation is semiaffine, and therefore, affine because
the automorphism group of
$\mathbb F_3$ is trivial (see \cite[V, Thm.\,5.4]{Lan}).
Thus we obtain the homomorphism
\begin{equation*}\label{iota}
\theta\colon N_{PG, \mathcal F}\to {\rm Aff}(\mathbb F_3^2),\;g\mapsto \varphi\circ g\circ \varphi^{-1}.
\end{equation*}
In view of $({\rm H}{}_{8})$, its kernel is trivial. Therefore, it remains to show that  its image is
${\rm SAff}(\mathbb F_3^2)$.

First, ${\rm SAff}(\mathbb F_3^2)\subseteq \theta(N_{PG, \mathcal F})$. To see this, it suffices to present a subset of $N_{PG, \mathcal F}$ whose image under $\theta$ generates the group ${\rm SAff}(\mathbb F_3^2)$. We claim that the subset of five elements $g_1, g_2, g_3, g_4, g_5\in {\mathbb P}(V)$ defined by the second column of Table 3 below shares this property:

\vskip 2mm
\centerline{\sc Table $3$}

\vskip 3mm
\begin{center}
\begin{tabular}{c||c|c}
$g$&$g\cdot (\alpha_0:\alpha_1:\alpha_2)\;\mbox{for every $(\alpha_0:\alpha_1:\alpha_2)\in {\mathbb P}(V)$}$&$g\cdot t_{\boldsymbol{i},\boldsymbol{j}}$
\\[2pt]
\hline
 \hline
 $g_1$&$(\alpha_1:\alpha_2:\alpha_0)$&$t_{{\boldsymbol i}+{\boldsymbol 2},{\boldsymbol j}}$\\
 \hline
  $g_2$&$(\alpha_0:\alpha_2:\alpha_1)$&$t_{{\boldsymbol {2i}},{\boldsymbol {2j}}}$\\
 \hline
  $g_3$&$(\alpha_0:\varepsilon\alpha_1:\varepsilon^2\alpha_2)$&$t_{{\boldsymbol i},{\boldsymbol j}+\boldsymbol{2}}$\\
 \hline
  $g_4$&$(\alpha_0:\varepsilon\alpha_1:\varepsilon\alpha_2)$&$t_{{\boldsymbol i},{\boldsymbol{i}}+{\boldsymbol{ j}}}$\\
 \hline
  $g_5$&$(\alpha_0+\alpha_1+\alpha_2:\alpha_0+\varepsilon\alpha_1
  +\varepsilon^2\alpha_2:\alpha_0+\varepsilon^2\alpha_1+\varepsilon\alpha_2)$&$t_{{\boldsymbol j},{\boldsymbol{2}}{\boldsymbol i}}$\\
 \end{tabular}
 \end{center}
 \vskip 3mm
\noindent  Indeed, using this definition and  \eqref{pij} it is directly verified that every $g_k\cdot t_{\boldsymbol{i},\boldsymbol{j}}$ is as specified in the third column of Table 3. It follow from this column that
\begin{enumerate}[\hskip 4.2mm\rm(a)]
\item $g_k\in N_{{\mathbb P}(V),\mathcal F}$ for every $k$;
\item $\theta(g_1)$ and $\theta(g_3)$ are the parallel translations of $\mathbb F_3^2$ by the vectors $(\boldsymbol{2}, \boldsymbol{0})$ and $(\boldsymbol{2}, \boldsymbol{0})$,  respectively;
    \item $\theta(g_2)$, $\theta(g_4)$, and  $\theta(g_5)$ are the linear transformations of $\mathbb F_3^2$ whose matrices in the basis $(\boldsymbol{1},\boldsymbol{0})$, $(\boldsymbol{0},\boldsymbol{1})$ are respectively $\big(\begin{smallmatrix} {\boldsymbol{2}}&{\boldsymbol{0}}\\
         {\boldsymbol{0}}&{\boldsymbol{2}}\end{smallmatrix}\big)$,
         $ \big(\begin{smallmatrix}
        {\boldsymbol{1}}&{\boldsymbol{0}}\\
         {\boldsymbol{1}}&{\boldsymbol{1}}
        \end{smallmatrix}\big)$, and
        $\big(
        \begin{smallmatrix}
        {\boldsymbol{0}}&{\boldsymbol{1}}\\
         {\boldsymbol{2}}&{\boldsymbol{0}}
        \end{smallmatrix}
        \big)$.
           \end{enumerate}

    \noindent As the vectors specified in (b) generate the vector group of $\mathbb F_3^2$, and the matrices specified in (c) generate the group ${\rm {SL}}_2(\mathbb F_3)$, this proves
    the claim maid.

   Thus ${\rm SAff}(\mathbb F_3^2)\subseteq \theta(N_{PG, \mathcal F})$. To prove the equality, it suffices, in view of
        $[{\rm Aff}(\mathbb F_3^2): {\rm SAff}(\mathbb F_3^2)]=2$, to show that
    ${\rm Aff}(\mathbb F_3^2)\neq \theta(N_{PG, \mathcal F})$. The map
        \begin{equation}\label{thetaaa}
    c\colon {\mathbb P}(V)\to {\mathbb P}(V), \quad (\alpha_0:\alpha_1:\alpha_2)\mapsto (\overline{\alpha_0}:\overline{\alpha_1}:\overline{\alpha_2}),
    \end{equation}
        where $\overline{\alpha_i}$ is complex conjugate of $\alpha_i$,
      is a bijection that preserves collinearity of points. It follows from \eqref{pij}, \eqref{thetaaa} that
    \begin{equation}\label{ttheta}
    \left.\begin{split}
    c(t_{{\boldsymbol 0}, {\boldsymbol 0}})&=t_{{\boldsymbol 0}, {\boldsymbol 0}},\quad
     c(t_{{\boldsymbol 0}, {\boldsymbol 1}})=t_{{\boldsymbol 0}, {\boldsymbol 2}},\quad
      c(t_{{\boldsymbol 0}, {\boldsymbol 2}})=t_{{\boldsymbol 0}, {\boldsymbol 1}},\\
       c(t_{{\boldsymbol 1}, {\boldsymbol 0}})&=t_{{\boldsymbol 1}, {\boldsymbol 0}},\quad
        c(t_{{\boldsymbol 1}, {\boldsymbol 1}})=t_{{\boldsymbol 1}, {\boldsymbol 2}},\quad
         c(t_{{\boldsymbol 1}, {\boldsymbol 2}})=t_{{\boldsymbol 1}, {\boldsymbol 1}},\\
          c(t_{{\boldsymbol 2}, {\boldsymbol 0}})&=t_{{\boldsymbol 2}, {\boldsymbol 0}},\quad
           c(t_{{\boldsymbol 2}, {\boldsymbol 1}})=t_{{\boldsymbol 2}, {\boldsymbol 2}},\quad
            c(t_{{\boldsymbol 2}, {\boldsymbol 2}})=t_{{\boldsymbol 2}, {\boldsymbol 1}},
    \end{split}
    \right\}
    \end{equation}
so    $c(\mathcal F)=\mathcal F$. In view of $({\rm H}{}_{10})$, this yields the bijection
\begin{equation*}\label{ftheta}
\widehat{c}:=\varphi\circ c\circ\varphi^{-1}\colon\mathbb F_3^2\to \mathbb F_3^2
\end{equation*}
that preserves collinearity of points; as above, the latter implies $\widehat{c}\in {\rm Aff}(\mathbb F_3^2)$.
We claim that $\widehat{c}\notin \theta(N_{{\mathbb P}(V),\mathcal F})$.
Arguing by contradiction, we assume the contrary, i.e., there is $g\in N_{PG, \mathcal F}$ such that \begin{equation}\label{ggg}
g\cdot t_{\boldsymbol{i}, \boldsymbol{j}}=c(t_{\boldsymbol{i}, \boldsymbol{j}})\;\,\mbox{for all
$\boldsymbol{i}, \boldsymbol{j}$.}
\end{equation}
It follows from
Figure 1, \eqref{ggg}, and \eqref{ttheta} that $t_{{\boldsymbol 2},  {\boldsymbol 1}}$,  $t_{{\boldsymbol 0},  {\boldsymbol 0}}$,
$t_{{\boldsymbol 1},  {\boldsymbol 0}}$, $t_{{\boldsymbol 1},  {\boldsymbol 1}}$  and
$t_{{\boldsymbol 2},  {\boldsymbol 2}}$,  $t_{{\boldsymbol 0},  {\boldsymbol 0}}$,
$t_{{\boldsymbol 1},  {\boldsymbol 0}}$, $t_{{\boldsymbol 1},  {\boldsymbol 2}}$ are two frames of ${\mathbb P}(V)$ such that $g$ maps the first one to the second. Therefore, the homogeneous coordinates of  $t_{{\boldsymbol 0},  {\boldsymbol 1}}$ in the first frame should coincide with the homogeneous coordinates of $g\cdot t_{{\boldsymbol 0},  {\boldsymbol 1}}=t_{{\boldsymbol 0},  {\boldsymbol 2}}$ in the second. But a direct calculation of these coordinates shows that they do not actually coincide. This contradiction completes the proof of $({\rm H}{}_{11})$.

\vskip 1mm

$({\rm H}{}_{12})$  By $({\rm H}{}_{5})$,
we have
${\rm Fl}(C)=\mathcal F$. Therefore, by \eqref{gFl},
\begin{equation}\label{FlFl}
{\rm Fl}(g\cdot C)=g\cdot \mathcal F.
\end{equation}
If $g\cdot C\in \mathcal H$, then $({\rm H}{}_{5})$ entails ${\rm Fl}(g\cdot C)=\mathcal F$, so \eqref{FlFl} yields
\begin{equation}\label{gFF}
g\cdot \mathcal F=\mathcal F,
\end{equation}
\vskip -2mm
\noindent  i.e., $g\in N_{PG, \mathcal F}\overset{({\rm H}{}_9)}{=}{\rm Hes}$. Conversely, if  $g\in N_{PG, \mathcal F}$, then \eqref{gFF} holds, therefore, \eqref{FlFl} yields ${\rm Fl}(g\cdot C)=\mathcal F$. In view of
$({\rm H}{}_{4})$, the latter equality  implies $g\cdot C\in \mathcal H$.
\end{proof}

\begin{rem} The reviewer of this paper asked about
the part of the proof of $({\rm H}{}_{11})$ that uses formula \eqref{thetaaa}:
``Is a result of this form valid
only over the field of complex numbers, or something similar can be done over
any algebraically closed field of characteristic zero?''
The answer is that both the result and essentially its proof hold true over
any algebraically closed field $k$ of characteristic zero. Indeed, let $\overline{\mathbb Q}$ be the algebraic closure of $\mathbb Q$ in $\mathbb C$ and let $
\kappa\in {\rm Aut}_{\mathbb Q}(\overline{\mathbb Q})$ be the restriction to $\overline{\mathbb Q}$ of the complex conjugation of $\mathbb C$. The assumptions on $k$
permit to identify $\overline{\mathbb Q}$ with a subfield of $k$
and extend $\kappa$ to a field automorphism $k\to k$, $\alpha\mapsto \widetilde{\alpha}$ (see \cite[Chap.\,V, Cor.\,2.9]{Lan}). Since
every $t_{{\boldsymbol i}, {\boldsymbol j}}$ is a $\overline{\mathbb Q}$-rational point of ${\mathbb P}(V)$,  when replacing $\mathbb C$ with $k$, the above proof of $({\rm H}{}_{11})$ remains unchanged if in \eqref{thetaaa} every $\overline{\alpha_i}$ is replaced with $\widetilde{\alpha_i}$.
\end{rem}

\subsection{\it Relative sections for the actions of $PG$ on ${\mathbb P}(U)$ and $X$}\label{Hp}

\begin{theorem}\label{PU}
The line $\ell$ \textup(see {\rm \eqref{ell})} is a relative section for the action of $PG$ on ${\mathbb P}(U)$. Its normalizer in $PG$ is the Hessian group ${\rm Hes}$.
\end{theorem}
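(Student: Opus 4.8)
The plan is to verify the two defining conditions $({\rm s}_1)$ and $({\rm s}_2)$ of Definition \ref{relsec} for $S=\ell$ and $R=PG$ acting on $M=\mathbb P(U)$, and then to identify $N_{PG,\ell}$. Note that $\ell$ is irreducible (it is a projective line), so there is only one irreducible component, $\ell$ itself, and the index $d$ in Definition \ref{relsec} equals $1$; this simplifies both conditions.

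For condition $({\rm s}_1)$, I need $\overline{PG\cdot\ell}=\mathbb P(U)$. By Lemma \ref{proC}$({\rm H}_1)$, every elliptic curve in $\mathbb P(V)$ is $PG$-equivalent to a member of the Hesse pencil $\mathcal H$; equivalently, every point $p_U(f)$ with $C(f)$ an elliptic curve lies in $PG\cdot\ell$. By Lemma \ref{S}(b), the set of such $f$ is the complement of the irreducible hypersurface $\{\Delta=0\}$ in $U$, hence $\{p_U(f)\mid C(f)\text{ elliptic}\}$ is a nonempty (indeed dense) open subset of $\mathbb P(U)$. Therefore $PG\cdot\ell$ contains a dense subset of $\mathbb P(U)$, so $\overline{PG\cdot\ell}=\mathbb P(U)$, giving $({\rm s}_1)$.

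For condition $({\rm s}_2)$, I take $\ell^0:=\{\ell_\lambda\in\ell\mid C_\lambda\text{ is an elliptic curve}\}$, which by Lemma \ref{proC}$({\rm H}_2)$ is $\ell$ minus the four points $\lambda=\infty,-3,-3\varepsilon,-3\varepsilon^2$, hence a dense open subset of $\ell$. Suppose $g\in PG$ satisfies $g\cdot\ell^0\cap\ell^0\neq\varnothing$: there is an elliptic $C_\lambda\in\mathcal H$ with $g\cdot C_\lambda=C_\mu\in\mathcal H$ elliptic. Then Lemma \ref{proC}$({\rm H}_{12})$ gives immediately $g\in{\rm Hes}=N_{PG,\mathcal H}$. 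By the argument in the proof of Lemma \ref{proC}$({\rm H}_9)$ (that $N_{PG,\ell}=N_{PG,\mathcal H}$, since $g$ normalizes the pencil of cubics if and only if it normalizes the corresponding line of forms in $\mathbb P(U)$), this yields $g\in N_{PG,\ell}$. This establishes $({\rm s}_2)$, and simultaneously shows $N_{PG,\ell}={\rm Hes}$; the reverse inclusion ${\rm Hes}=N_{PG,\mathcal H}\subseteq N_{PG,\ell}$ is again the same correspondence from $({\rm H}_9)$. Note also that ${\rm Hes}$ is finite (order $216$ by Lemma \ref{proC}$({\rm H}_{11})$), so property \eqref{*} holds trivially for $\ell$ (it is quasiprojective anyway), and the homogeneous fiber space $PG\times^{\rm Hes}\ell$ exists.

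The only genuinely delicate point is making sure the elliptic locus in $\ell$ is genuinely dense and that the correspondence between $\ell$ and $\mathcal H$ transports normalizers correctly — but both are already in hand: density follows from $({\rm H}_2)$, and the normalizer identification is precisely the content of the first paragraph of the proof of $({\rm H}_9)$. I expect no real obstacle; the proof is essentially an assembly of Lemma \ref{proC} and Lemma \ref{S}(b), and will be short.
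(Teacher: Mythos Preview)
Your proposal is correct and follows essentially the same approach as the paper: property $({\rm s}_1)$ via Lemma~\ref{S}(b) and Lemma~\ref{proC}$({\rm H}_1)$ together with irreducibility of $\ell$, property $({\rm s}_2)$ via Lemma~\ref{proC}$({\rm H}_2)$, $({\rm H}_9)$, $({\rm H}_{12})$, and the normalizer identification via $({\rm H}_9)$. The paper's own proof is simply a terser citation of exactly these ingredients.
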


\begin{proof} For the triple $M={\mathbb P}(U)$, $S=\ell$,
$R=PG$,
property ${\rm(s_1)}$ in De\-fi\-ni\-tion \ref{relsec} holds because
of Lemmas \ref{S}\eqref{Sb}, \ref{proC}${\rm (H_1)}$, and irreducibility of
$\ell$.
Property ${\rm(s_2)}$ holds because of Lemma
\ref{proC}${\rm (H_2)}$,${\rm (H_9)}$,${\rm (H_{12})}$. This proves the first statement. The second follows from
Lemma \ref{proC}${\rm (H_9)}$.
\end{proof}

\subsection{\it Proof of Theorem {\rm\ref{main2}}} \label{pr2}
We take a nonzero form $f\in L$ and put $a=p_U(f)\in\ell$. It follows from
Lem\-ma\;\ref{proC}({\rm H}${}_{2}$),({\rm H}${}_{5}$) that
\begin{equation}\label{fiber1}
\pi_9^{-1}(a)=
\bigcup_{\boldsymbol{i},\boldsymbol{j}=\boldsymbol{0}}^{\boldsymbol{2}}\big(a, t_{\boldsymbol{i},\boldsymbol{j}}\big)\quad \mbox{if $a\neq \ell_\infty, \ell_{-3}, \ell_{-3\varepsilon}, \ell_{-3\varepsilon^2}$}.
\end{equation}
By Lem\-ma\;\ref{proC}({\rm H}${}_{3}$),
if  $\lambda=\infty$,
$-3$, $-3\varepsilon$, $-3\varepsilon^2$ and $a=\ell_\lambda$, then $C(f)$ is the triangle
$l_{\lambda, 1}\cup l_{\lambda, 2}\cup l_{\lambda, 3}$. By Lemma \ref{class}, this entails $h_{\mu, 6}\in G\cdot f$  for some $\mu$. In view of \eqref{gFl} and Table 2, this, in turn, yields
\begin{equation}\label{fiber2}
\pi_9^{-1}(a)=
\bigcup_{k=1}^3\big(a\times l_{\lambda, k}\big)\quad \mbox{if $a=\ell_{\lambda}$ for $\lambda=\infty, -3, -3\varepsilon, -3\varepsilon^2$.}
\end{equation}

From \eqref{fiber1}, \eqref{fiber2} we infer that every irreducible component of $\pi_9^{-1}(\ell)$ is a line, there are exactly $9+12=21$ such irreducible components, and those of them which $\pi_9$ dominantly maps to $\ell$ are the lines $\ell\times t_{\boldsymbol{l},\boldsymbol{k}}$, where $(\boldsymbol{l},\boldsymbol{k})\in\mathbb F_3^2$.

In view of Theorem \ref{PU}, Lemma \ref{55}(b),(c), and Lemma \ref{rese}(b),(c), each
line
$\ell\times t_{\boldsymbol{i},\boldsymbol{j}}$ is a relative section for the action of $PG$ on $X$, and $N_{PG, \ell\times t_{\boldsymbol{i},\boldsymbol{j}}}=N_{N_{PG, \ell}, \ell\times t_{\boldsymbol{i},\boldsymbol{j}}}$. By Theorem \ref{PU}, we have $N_{PG, \ell}={\rm Hes}$. Hence
$N_{PG, \ell\times t_{\boldsymbol{i},\boldsymbol{j}}}$ is the ${\rm Hes}$-stabilizer ${\rm Hes}_{\boldsymbol{i},\boldsymbol{j}}$
of $t_{\boldsymbol{i},\boldsymbol{j}}$.
Clearly,
the ${\rm SAff}(\mathbb F_3^2)$-stabilizer of any point of $\mathbb F_3^2$ is isomorphic to ${\rm SL}_2(\mathbb F_3)$. In view of Lemma \ref{proC}$({\rm H}_{10}), ({\rm H}_{11})$, this means that ${\rm Hes}_{\boldsymbol{i},\boldsymbol{j}}$ is isomorphic to ${\rm SL}_2(\mathbb F_3)$.
Finally, the isomor\-phism
$$
\ell\to \ell\times t_{\boldsymbol{i},\boldsymbol{j}},\;a\mapsto (a, t_{\boldsymbol{i},\boldsymbol{j}})
$$
is clearly ${\rm Hes}_{\boldsymbol{i},\boldsymbol{j}}$-equivariant. In view of
Lemma \ref{rese}(e), this proves (a).

The inefficiency kernel of the $G$-action on $U$ is the center $Z$ of $G$. Therefore, this action determines the faithful action of $PG$ on $U$. Clearly, $N_{PG, L}=N_{PG, \ell}={\rm Hes}$, so the subgroup ${\rm Hes}_{\boldsymbol{i},\boldsymbol{j}}$ of ${\rm Hes}$ linearly acts of $L$. This action determines $PG\times ^{{\rm Hes}_{\boldsymbol{i},\boldsymbol{j}}} L$, the rank $2$ homogeneous vector bundle over $PG/{\rm Hes}_{\boldsymbol{i},\boldsymbol{j}}$ with fiber $L$. Clearly, its projectivization is $PG\times ^{{\rm Hes}_{\boldsymbol{i},\boldsymbol{j}}} \ell$. This proves (b).\hfill $\Box$

\section{Rationality of $X$}

\subsection{\it Rationality of some homogeneous spaces} In this subsection,
ratio\-nality of some homogeneous spaces is proved. It will be used in the proof of Theorem \ref{main3} given in Subsection \ref{pr3}.

\begin{theorem}\label{ratY} For every finite subgroup $K$ of the group $G=\s3$, the $8$-dimensional homogeneous space $G/K$
 is a rational algebraic variety.
\end{theorem}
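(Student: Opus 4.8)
The plan is to trade $G/K$ for a quotient of a rational variety by a \emph{linear} finite group action. Put $V=\mathbb C^3$, $G=\mathrm{SL}(V)$, let $e_0,e_1,e_2$ be the standard basis, and consider the $8$-dimensional $G$-variety $M:=\mathbb P(V)\times V\times V$ with $G$ acting diagonally. Since $\dim M=2+3+3=8=\dim G$ and the point $p:=([e_0],e_1,e_2)$ has trivial $G$-stabilizer (an element fixing $[e_0]$, $e_1$, $e_2$ is $\mathrm{diag}(\mu,1,1)$ with $\mu=\det=1$), the orbit $G\cdot p$ has dimension $8$ and, $M$ being irreducible, is dense and open in $M$; the orbit map $G\to M$, $g\mapsto g\cdot p$, is thus injective and dominant, hence ($\mathrm{char}=0$) a $G$-birational isomorphism between $M$ and $G$ acted on by left translations. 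Restricting the action to $K$ and using that $K\backslash G$ is isomorphic to $G/K$ (via inversion), I get $M/K\sim_{\mathrm{bir}}G/K$. So it suffices to prove that $M/K$ is rational for an arbitrary finite subgroup $K\subseteq\mathrm{GL}(V)$.

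First I would strip off the two ``linear'' directions by the no-name lemma (cf.\ \cite{PV}, \cite{Kat}). As $K$ is finite and acts faithfully, it acts freely on a dense open $K$-stable $\Omega\subseteq V\oplus V$ (the complement of the proper subspaces $\ker(k-\mathrm{id})\oplus\ker(k-\mathrm{id})$, $k\ne e$); over the geometric quotient $\Omega/K$ the projection $M\to V\oplus V$ presents $M|_{\Omega}/K$ as the projectivization $\mathbb P(\mathcal E)$ of the rank-$3$ vector bundle $\mathcal E:=\Omega\times^{K}V$, and a projectivized vector bundle is trivial over a dense open of its base, so $M/K\sim_{\mathrm{bir}}\bigl((V\oplus V)/K\bigr)\times\mathbb P^2$. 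Applying the same device to $V\oplus V\to V$ (over the free locus of $K$ on $V$) gives $(V\oplus V)/K\sim_{\mathrm{bir}}(V/K)\times\mathbb A^3$. Hence $M/K\sim_{\mathrm{bir}}(V/K)\times\mathbb A^3\times\mathbb P^2$, and everything reduces to the rationality of $\mathbb C^3/K=V/K$.

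For this last point I would descend to projective space. Let $\bar K\subseteq\mathrm{PGL}(V)$ be the image of $K$, and let $\widehat K\subseteq\mathrm{GL}(V)$ be generated by $K$ and the scalars $\mathbb C^{*}\mathrm{id}_V$; then $K\trianglelefteq\widehat K$ and $\Gamma:=\widehat K/K\cong\mathbb C^{*}\mathrm{id}_V/(\mathbb C^{*}\mathrm{id}_V\cap K)$ is a one-dimensional torus. The field $F:=\mathbb C(V)^{\widehat K}=\bigl(\mathbb C(V)^{\mathbb C^{*}\mathrm{id}_V}\bigr)^{\bar K}=\mathbb C\bigl(\mathbb P(V)/\bar K\bigr)$ is the function field of the surface $\mathbb P(V)/\bar K$, which is unirational (being dominated by $\mathbb P^2$) and therefore rational by Castelnuovo's criterion; thus $F$ is purely transcendental of degree $2$. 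Now $\mathbb C(V/K)=\mathbb C(V)^{K}$ carries a $\Gamma$-action with invariant subfield $F$ and $\operatorname{trdeg}_F\mathbb C(V)^{K}=1$; moreover $\Gamma$ acts generically freely, since a generic $v\in V$ is an eigenvector of no non-scalar element of $K$, so its $\widehat K$-stabilizer is trivial. Consequently a model of $V/K$ is, over the generic point of $\mathbb P(V)/\bar K$, a $\Gamma$-torsor, and a $\mathbb G_m$-torsor over a field is trivial (Hilbert's Theorem 90), so $\mathbb C(V)^{K}=F(\theta)$ is purely transcendental of degree $3$ over $\mathbb C$. Hence $V/K$ is rational, $M/K\sim_{\mathrm{bir}}\mathbb A^8$, and $G/K$ is rational.

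The step carrying the real content is the rationality of $\mathbb C^3/K$: for faithful representations of higher dimension the analogous quotient need not even be stably rational, so this cannot follow from pure formalities; here it is forced by the accident that the quotient descends to a \emph{surface}, where unirational implies rational, together with the triviality of $\mathbb G_m$-torsors over a field. The only other place where the specific group $\mathrm{SL}_3$ is used is the initial observation that $\mathbb P(V)\times V\times V$ is prehomogeneous under $\mathrm{SL}_3$ with open orbit $\cong\mathrm{SL}_3$; the two no-name reductions in between are formal.
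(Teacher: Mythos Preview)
Your argument is correct, and it follows a genuinely different path from the paper's.

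The paper argues via a $6$-dimensional parabolic $P\subset G$: it shows $P$ is special in Serre's sense and acts generically freely on $G/K$ (this requires a case split according to whether $K$ meets the center $Z$, the second case being handled by passing to $\mathrm{PSL}_3$), so $G/K$ is birationally $P\times B$ with $B$ a unirational surface, hence rational by Castelnuovo. Your route instead linearizes the problem at the outset: realizing $G$ as the open orbit in $\mathbb P(V)\times V\times V$, you trade $G/K$ for a quotient of a linear $K$-variety, peel off two factors by the no-name lemma, and reduce to the classical fact that $\mathbb C^3/K$ is rational, which you then prove by descending along the $\mathbb G_m$-action to the surface $\mathbb P^2/\bar K$ and invoking Hilbert~90 plus Castelnuovo.

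Both proofs ultimately rest on Castelnuovo's criterion for a unirational surface, but they reach that surface differently: the paper's surface is the abstract rational quotient $(G/K)/P$, yours is the concrete $\mathbb P^2/\bar K$. Your approach avoids the case distinction on $K\cap Z$ entirely, is uniform for all finite $K\subset\mathrm{GL}_3$ (not just $\mathrm{SL}_3$), and isolates the rationality of $\mathbb C^3/K$ as a standalone statement of independent interest. The paper's approach, on the other hand, uses the parabolic structure of $G$ more directly and fits naturally into the framework of special groups used elsewhere in the paper; it would also adapt more readily to other simple groups where no convenient prehomogeneous linear model is available. The implicit uses of specialness in your argument (for $\mathrm{GL}_n$ in the no-name steps, for $\mathbb G_m$ via Hilbert~90) are standard and pose no difficulty.
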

\begin{proof} Let $\nu\colon G\to PG={\rm PSL}_3(\mathbb C)$
be the canonical projection. Its kernel is the center of $G$:
$$
{\rm ker}\, \nu=Z=\{{\rm diag}(\varepsilon, \varepsilon, \varepsilon)\mid \varepsilon \in\mathbb C^*, \varepsilon^3=1\}.
$$
 As the order
  of $Z$ is prime number, one of the following
possibi\-lities
holds:
\begin{enumerate}[\hskip 4.2mm\rm(a)]
\item $K\bigcap Z$ is trivial,
\item $Z\subseteq K$.
\end{enumerate}
We shall explore them
separately.

\vskip 1mm

{\it Case} (a).
Let (a) holds. We consider in $G$ the $6$-dimensional parabolic subgroup
\begin{equation*}
P:= \bigg\{\hskip -1.5mm\begin{pmatrix}
A&\ast\\
0&\det A^{-1}
\end{pmatrix}\,\Big\vert\, A\in{\rm GL}_2(\mathbb C)\hskip -.6mm\bigg\}.
\end{equation*}
All maximal connected semisimple subgroups of $P$ are isomorphic to ${\rm SL}_2(\mathbb C)$. By \cite[Thm. 2.8]{PV}, this entails that $P$ is a special group in the sense of Serre \cite[4.1]{Ser}.

We claim that the natural action of $P$ on $G/K$ is generically free, i.e., $P$-stabilizers of points in general position in
$G/K$ are trivial.

To prove this claim, note that if $\pi\colon G\to G/K$ is the canonical projection, then for every $g\in G$, the $P$-stabilizer of $\pi(g)$ is the finite group
\begin{equation}\label{stabil}
P_{\pi(g)}:=P\cap gKg^{-1}.
\end{equation}

Let $K_1,\ldots, K_d$ be all nontrivial subgroups of the finite group $K$. By
\cite[Chap.\,I, 1.7, Prop.]{Bor}, every set
\begin{equation}\label{ili}
M_i:=\{g\in G\mid gK_ig^{-1}\subseteq P\}\overset{\rm \eqref{stabil}}{=}\{g\in G\mid gK_ig^{-1}\subseteq P_{\pi(g)}\}
\end{equation}
is closed in $G$. Therefore,
\begin{equation}
\label{cL}
M:=M_1\cup\ldots\cup M_d
\end{equation}
is closed in $G$ as well.

If $g\in M$, then $g\in M_i$ for some $i$, hence $P_{\pi(g)}$ is nontrivial in view of \eqref{ili}.
Conversely, if $P_{\pi(g)}$ is nontrivial for some $g\in G$, then in view of \eqref{stabil}, there is $i$ such that $P_{\pi(g)}=gF_ig^{-1}$; whence $g\in M_i$ by \eqref{ili}.

Thus,
$P_{\pi(g)}$ is trivial if and only if $g$ lies in the open subset $G\setminus M$ of $G$. This subset in nonempty. Indeed, arguing by contradiction, we assume the contrary, i.e., $G=M$. Since the variety $G$ is irreducible, \eqref{cL} then implies that $G=M_i$ for some $i$. In view of \eqref{ili}, this yields
\begin{equation}\label{interse}
K_i\subseteq \bigcap_{g\in G}gPg^{-1}.
\end{equation}
The right-hand side of \eqref{interse} is a proper normal algebraic subgroup of $G$. Since $G$ is simple, this means that this subgroup lies in $Z$. Therefore, $K_i\subseteq Z$ by \eqref{interse}.  This contradicts the assumption that (a) holds, because $K_i$ is a nontrivial subgroup of $K$.

Thus, $G\setminus M\neq \varnothing$, and therefore, $\pi(G\setminus M)$ contains a nonempty open subset of $G/K$. Since $P_a$ is trivial for every $a\in \pi(G\setminus M)$, this proves the claim that the $P$-action on $G/K$ is generically free.

  In view of \cite[\S4]{Ser}, \cite[Sect. 2.5 and Thm. 2.8]{PV}, since $P$ is special, this claim  implies that $G/K$ is birationally isomorphic to the pro\-duct of $P$ and
   an irreducible variety
   $B$ whose
field of rational functions
is $\mathbb C(G/K)^P$. It follows from rationality of the underlying variety of every connected affine algebraic group \cite{C}
that $P$ is rational and
$B$ is unira\-tio\-nal. On the other hand,
$\dim B=
\dim G/K-\dim P=
8-6=2$.
Therefore, by Castelnuovo's theorem, $B$ is rational. Thus $G/K$ is birationally isomorphic to a product of rational varieties, hence itself is rational.
This completes the proof that $G/K$ is rational if (a) holds.

\vskip 2mm

{\it Case} (b). Let (b) holds. Then $G/K$ is isomorphic to ${\rm PSL}_3(\mathbb C)/\nu(K)$. There\-fore, the proof will be complete if for every finite subgroup $D$ of ${\rm PSL}_3(\mathbb C)$, rationality of  ${\rm PSL}_3(\mathbb C)/D$ is proved.

We consider in ${\rm PSL}_3(\mathbb C)$ the parabolic sub\-group $Q:=\nu (P)$. The center of every maximal connected semi\-simple subgroup of $P$ has order $2$ and therefore, does not lie in ${\rm ker}\,\tau$. Hence, like for $P$, all maximal connected semi\-simple sub\-groups in $Q$ are isomorphic to ${\rm SL}_2(\mathbb C)$, there\-fore, $Q$ is a special group in the sense of Serre. As the  center of ${\rm PSL}_3(\mathbb C)$ is trivial, its intersection with $D$
is trivial.
Now the same reasoning applies as above when considering case (a), if we replace in it $G$ with ${\rm PSL}_3(\mathbb C)$, $P$ with $Q$, and $K$ with $D$. This completes the proof of Theorem \ref{ratY}.
\end{proof}

\begin{corollary}\label{c3} For every finite subgroup $D$ of the group  ${\rm PSL}_3(\mathbb C)$, the homogeneous space ${\rm PSL}_3(\mathbb C)/D$ is a rational al\-gebraic variety.
\end{corollary}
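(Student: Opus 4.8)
The plan is to obtain Corollary \ref{c3} as an immediate consequence of Theorem \ref{ratY}. Given a finite subgroup $D$ of $PG={\rm PSL}_3(\mathbb C)$, I would pass to its preimage $K:=\nu^{-1}(D)$ under the canonical isogeny $\nu\colon G\to PG$. Since $\ker\nu=Z$ is finite, $K$ is a finite subgroup of $G$; it obviously contains $Z$, and $\nu$ induces an isomorphism of homogeneous spaces $G/K\xrightarrow{\sim}PG/\nu(K)=PG/D$. By Theorem \ref{ratY} the variety $G/K$ is rational, hence so is $PG/D$. That is the whole argument.

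In fact this is precisely what is proved in Case (b) of the proof of Theorem \ref{ratY}, so Corollary \ref{c3} carries no new content; it merely isolates that statement for the later reference needed in the proof of Theorem \ref{main3}. If a self-contained argument is preferred, one repeats verbatim the reasoning of Case (a) with $G$ replaced by $PG$, the parabolic $P$ replaced by $Q:=\nu(P)$, and $K$ replaced by $D$: the maximal connected semisimple subgroups of $Q$ are still copies of ${\rm SL}_2(\mathbb C)$, because the order-$2$ centre of such a subgroup of $P$ is not contained in $Z=\ker\nu$, so $Q$ is a special group in the sense of Serre; the centre of $PG$ being trivial, the intersection $D\cap Z(PG)$ is automatically trivial, which is exactly the hypothesis required to run the generic-freeness argument; then $PG/D$ is birational to $Q\times B$ with $B$ unirational of dimension $\dim PG-\dim Q=8-6=2$, hence rational by Castelnuovo, while $Q$ is rational as the underlying variety of a connected affine algebraic group, so $PG/D$ is rational.

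I do not expect any genuine obstacle here. The only point requiring attention — and already handled inside the proof of Theorem \ref{ratY} — is verifying that $Q$ remains special after passing to the adjoint group, i.e. that the order-$2$ centre of the ${\rm SL}_2(\mathbb C)$-factor survives modulo $Z$; everything else is a mechanical transcription of Case (a), or, in the first approach, a one-line reduction to Theorem \ref{ratY}.
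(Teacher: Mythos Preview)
Your proposal is correct and matches the paper's approach: Corollary~\ref{c3} is stated without a separate proof precisely because its content is already established inside Case~(b) of the proof of Theorem~\ref{ratY}, and your two descriptions (reduce via $K=\nu^{-1}(D)$ and apply Theorem~\ref{ratY}, or rerun the Case~(a) argument with $PG$, $Q=\nu(P)$, $D$) are exactly the two ways of reading that passage.
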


\subsection{\it Proof of Theorem {\rm\ref{main3}}}\label{pr3}  By Theorem \ref{main2}, $X$ is birationally isomorphic to
the homogeneous fiber space
$PG\times^{{\rm Hes}_{\boldsymbol{i},\boldsymbol{j}}}\ell$ and the latter is the projectivization of the homogeneous vector bundle $PG\times^{{\rm Hes}_{\boldsymbol{i},\boldsymbol{j}}} L$. Since $PG\times^{{\rm Hes}_{\boldsymbol{i},\boldsymbol{j}}} L$ is locally trivial in the Zariski topology,
$PG\times^{{\rm Hes}_{\boldsymbol{i},\boldsymbol{j}}}\ell$
is locally trivial in the Zariski topology, too. Therefore, $X$ is birationally isomorphic to
$(PG/{\rm Hes}_{\boldsymbol{i},\boldsymbol{j}})\times \ell$.  The homogeneous space $PG/{\rm Hes}_{\boldsymbol{i},\boldsymbol{j}}$ is rational by Corol\-la\-ry\;\ref{c3}. Since the line $\ell$ is rational, too, this implies rationality of $X$.\hfill $\Box$

\end{document}